\DeclarePairedDelimiter\abs{\lvert}{\rvert}%
  \theoremstyle{plain}
\newtheorem{theorem}{Theorem}[section]
\newtheorem{corollary}[theorem]{Corollary}
\newtheorem{lemma}[theorem]{Lemma} %%%%%
  \theoremstyle{remark}
\newtheorem{remark}[theorem]{Remark}
  \theoremstyle{definition}
\newtheorem{definition}[theorem]{Definition}
\newcommand{\ie}{\textit{i.e. }}
\newcommand{\NN}{\mathbb{N}}
\newcommand{\QQ}{\mathbb{Q}}
\newcommand{\ZZ}{\mathbb{Z}}
\newcommand{\RR}{\mathbb{R}}
\renewcommand{\SS}{\mathbb{S}}
\title{Polyak type equations for virtual arrow diagram invariants in the annulus}
\author{Arnaud Mortier \\ \itshape mortier@math.ups-tlse.fr}
\date{\today}
\begin{document}

\maketitle

\begin{abstract}
\footnotesize

We describe the space of arrow diagram formulas (defined in \citep{PolyakViro}) for virtual knot diagrams in the annulus $\RR\times\SS^1$ as the kernel of a linear map, inspired from a conjecture due to M. Polyak. As a main application, we slightly improve Grishanov-Vassiliev's theorem for planar chain invariants (\citep{Grishanov}).

\end{abstract}

\tableofcontents

\section{Introduction}

Gauss diagrams were introduced in knot theory for the purpose of extracting new combinatorial data from the widely studied knot diagrams. On one hand it gave rise to a generalization of knot theory, known as \textit{virtual knot theory} \citep{KauffmanVKT}. On another hand, it allowed a new point of view on Vassiliev's finite type invariants (see \citep{PolyakViro}, \citep{Goussarov}, \citep{Fiedler}). Several approaches have been used in order to define finite type invariants for virtual knots. Vassiliev-Kauffman's invariants \citep{KauffmanVKT} are directly inspired from the axiomatic definition of Vassiliev invariants given by J.Birman and X.-S.Lin \citep{BirmanLin}, while the approach of M.Goussarov, M.Polyak and O.Viro (GPV, \citep{GPV}) is inspired from the representation of Vassiliev invariants due to Goussarov \citep{Goussarov}. 

Another direction of investigation is the approach of T.Fiedler, who decorates Gauss diagrams with homological information when the knot diagrams live in a surface that is more complicated than the sphere or the plane. 

Here we focus on homogeneous GPV's invariants for virtual knot diagrams in the annulus.
\begin{itemize}
\item The annulus, because it has an abelian fundamental group. This property allows one to prove that Fiedler's decorated Gauss diagrams encode the knot diagrams \textit{faithfully} -- \ie with no loss of information \citep{MortierGaussDiagrams}.
\item Homogeneous GPV invariants, because as we will show it is the good framework to consider a conjecture of M.Polyak, who predicts the existence of a linear map whose kernel consists of Gauss diagram invariants.
\end{itemize}

Every result in this paper can be actually extended to the case of an arbitrary surface replacing the annulus (except for Theorem~\ref{thm:main} where the surface needs to be orientable) but it requires more complicated combinatorial tools. It will be done in a forthcoming paper.

\subsection*{Acknowledgements}

The author thanks Thomas Fiedler for introducing him to the subject of Gauss diagram invariants, and for useful remarks on the presentation. He also acknowledges useful corrections from Victoria Lebed, and thanks the referee for careful reading and lucid remarks.

\section{Algebraic structures in Gauss diagram invariants theory}

\textit{Warning}. Though every Gauss or arrow diagram in this article comes with homological markings due to the solid torus framework, we will often refer to works where this is not the case, since many notions do not depend on this. Though it is not always explicitly mentioned, everything depends on the value of a fixed integer $K$ which is the global marking of every diagram (see section~\ref{sec:GDspaces} below).

\subsection{Gauss diagram spaces}\label{sec:GDspaces}
Following T. Fiedler (\citep{Fiedler}, \citep{Fiedlerbraids}) we define a \textit{(decorated) Gauss diagram (of degree n)} as an oriented circle marked with an integer, and $n$ oriented chords (the \textit{arrows}, which are abstract, \ie only the endpoints matter), each one equipped with a sign (also \textit{writhe}) and an integer (its \textit{marking}), up to oriented homeomorphisms of the circle. It is to be understood that the $2n$ endpoints of the arrows are distinct. It is proved in \citep{MortierGaussDiagrams} that such Gauss diagrams are in $1$-$1$ correspondence with virtual knot diagrams in the annulus, up to \textit{usual and virtual} Reidemeister moves. We denote by $\mathcal{G}_n$ (resp. $\mathcal{G}_{\leq n}$) the $\QQ$-vector space freely generated by Gauss diagrams of degree $n$ (resp. ${\leq n}$), and set $\mathcal{G}= \varinjlim\mathcal{G}_{\leq n}$.

To the well-known Reidemeister moves for knot diagrams correspond $\mathrm{R}$-moves for Gauss diagrams (see Fig.\ref{pic:Rmoves} -- as usual, the unseen parts must be the same for all of the diagrams that belong to a given equation.). Beware that these moves depend on the homology class $K$ of the considered knot diagrams.

\begin{figure}[h!]
\centering 
\psfig{file=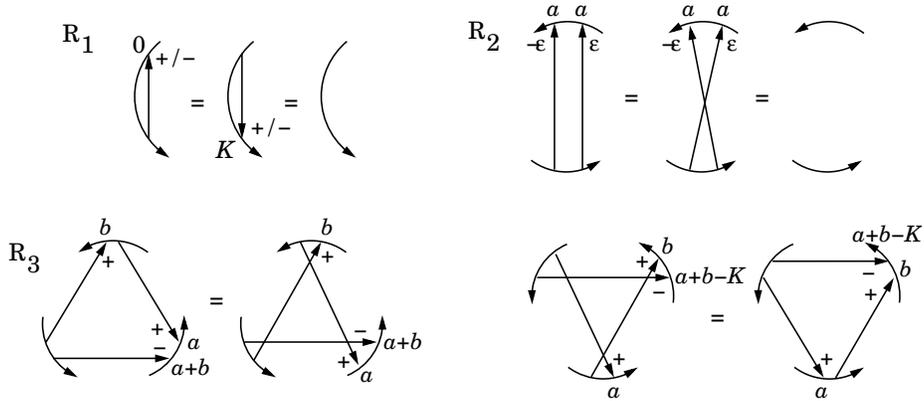,scale=0.62}
\caption{$\mathrm{R}$-moves for decorated Gauss diagrams with global marking $K$}\label{pic:Rmoves}
\end{figure}

We prove the following:

\begin{theorem}\label{GaussComplete}
The equivalence class of a Gauss diagram associated with a knot diagram, modulo the $\mathrm{R}$-relations of Fig.\ref{pic:Rmoves}, is a complete invariant for virtual knots with homology class $K$.
\end{theorem}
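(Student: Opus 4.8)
The plan is to deduce the statement from the $1$–$1$ correspondence of \citep{MortierGaussDiagrams} between decorated Gauss diagrams with global marking $K$ and virtual knot diagrams of homology class $K$ in the annulus, taken up to \emph{virtual} Reidemeister moves. Write $\Phi$ for the map sending a virtual knot diagram $D$ to its decorated Gauss diagram $\Phi(D)$. The cited result says that $\Phi$ induces a bijection between the set of virtual knot diagrams modulo virtual moves and the set of decorated Gauss diagrams with marking $K$; in particular $\Phi$ is defined on the nose (virtual moves leave $\Phi(D)$ unchanged — the detour move) and is surjective, with a realization procedure producing, for each Gauss diagram $G$, a virtual diagram $D$ with $\Phi(D)=G$. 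What remains is to understand how the \emph{classical} Reidemeister moves interact with $\Phi$, and this is precisely what the $\mathrm{R}$-moves of Fig.\ref{pic:Rmoves} are designed to encode.

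\emph{Step 1 — classical moves push forward to $\mathrm{R}$-moves.} If $D$ and $D'$ differ by a single classical Reidemeister move performed inside a disk, then $\Phi(D)$ and $\Phi(D')$ agree outside the arcs involved and differ there exactly by the local replacement displayed in one of the equations of Fig.\ref{pic:Rmoves}. The point to verify is that the homological data transform as prescribed: the global marking $K$ is an isotopy invariant of the knot diagram, while the marking of each arrow — being the homology class carried by one of the two circle arcs it cuts off — is either left unchanged or shifted in the controlled way recorded in the figure (and this is where the $K$-dependence of the moves comes from). Since virtual moves do not affect $\Phi$ at all, any finite sequence of classical and virtual moves from $D$ to $D'$ produces a finite sequence of $\mathrm{R}$-moves from $\Phi(D)$ to $\Phi(D')$. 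Hence equivalent virtual knots have $\mathrm{R}$-equivalent Gauss diagrams.

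\emph{Step 2 — realizing $\mathrm{R}$-moves.} Conversely, suppose $G$ and $G'$ are decorated Gauss diagrams related by a single $\mathrm{R}$-move. Using the realization part of \citep{MortierGaussDiagrams}, pick a virtual knot diagram $D$ with $\Phi(D)=G$; moreover one may arrange that the sub-configuration of arrows on which the $\mathrm{R}$-move acts is carried by an honestly embedded (non-virtual) elementary tangle inside a disk, the remainder of $D$ being a virtual tangle glued to its boundary. Performing the classical Reidemeister move corresponding to the $\mathrm{R}$-move inside that disk yields a diagram $D'$ with $D'$ equivalent to $D$ and $\Phi(D')=G'$. Iterating over the terms of an $\mathrm{R}$-equivalence shows that $\mathrm{R}$-equivalent Gauss diagrams are realized by equivalent virtual knots. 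Together with Step 1 and the injectivity of the underlying bijection, this gives the asserted complete invariance.

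The main obstacle I expect is Step 2, specifically the assertion that the local portion of a Gauss diagram targeted by an $\mathrm{R}$-move can always be realized by a genuinely classical fragment of knot diagram while the homological markings stay under control: one must check that the marking conditions appearing in Fig.\ref{pic:Rmoves} are exactly those forced by the existence of such a classical local model in the annulus, and that gluing this classical fragment back onto a virtual realization of the rest of $G$ meets no global obstruction. This is a bookkeeping argument with the integer markings, running parallel to the (unmarked) realization lemma behind the Goussarov–Polyak–Viro correspondence, but it is the place where the annulus geometry and the fixed integer $K$ genuinely intervene.
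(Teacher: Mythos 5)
Your two-step outline (classical moves push forward to $\mathrm{R}$-moves; $\mathrm{R}$-moves are realized by genuine Reidemeister moves on some realization) is the right skeleton, and it matches the paper's strategy of combining the faithfulness result of \citep{MortierGaussDiagrams} with a correspondence between Reidemeister moves and $\mathrm{R}$-moves. However, there are two genuine gaps. First, your Step 1 asserts that \emph{any} classical Reidemeister move performed on a knot diagram changes the Gauss diagram by one of the moves listed in Fig.\ref{pic:Rmoves}. That is not how the figure is set up: it records only a restricted family of local pictures (in particular only the $\mathrm{R}_3$ configurations matching the single oriented move $\Omega 3a$), not all oriented variants of the three Reidemeister moves. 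The paper closes this gap by invoking Polyak's Theorem $1.1$ from \citep{P2}, which says that a small set of oriented moves ($\Omega 1a$, $\Omega 1b$, $\Omega 2a$, $\Omega 3a$) generates all the others, and by observing that since Polyak's generation argument is local it survives the passage to the annulus with markings. Without this ingredient your Step 1 simply fails for the oriented Reidemeister III moves not depicted in the figure.

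Second, you correctly identify the realizability problem in Step 2 --- whether the marking conditions in Fig.\ref{pic:Rmoves} are exactly those forced by the existence of a classical local model in the annulus --- but you leave it as ``the main obstacle I expect'' rather than resolving it. The paper resolves it with a concrete criterion: an $\mathrm{R}$-move corresponds to an honest Reidemeister move picture if and only if the little loops of Fig.\ref{pic:loops} are nullhomologous, and the marking constraints written into the $\mathrm{R}$-moves are precisely the translation of that homological condition. Since this is the one place where the annulus geometry and the fixed class $K$ actually enter the proof, flagging it without an argument leaves the proof incomplete.
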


There is a linear isomorphism $I: \mathcal{G}_{\leq n}\rightarrow\mathcal{G}_{\leq n}$ that associates to a Gauss diagram the formal sum of its subdiagrams (see \citep{GPV}). A \textit{Gauss diagram formula} is a knot invariant of the form
\begin{eqnarray}\label{eq1}
\mathtt{G} \mapsto w(G,I(\mathtt{G})),
\end{eqnarray}
where $G\in \mathcal{G}$, $\mathtt{G}$ is the Gauss diagram associated with a knot projection and $w$ is an orthogonal scalar product with respect to the basis of $\mathcal{G}$ given by Gauss diagrams. Since this theory was born, mainly two scalar products have been used, namely:
\begin{itemize}
\item The \textit{orthonormal} scalar product, which we shall denote by $(,)$. It is used notably in \citep{GPV} and \citep{Fiedler}.
\item Its normalized version $ \left\langle , \right\rangle $, defined by
\begin{eqnarray}\label{eq2}
 \left\langle G,G^\prime  \right\rangle :=\abs{\operatorname{Aut}(G)}
\cdot(G,G^\prime),
\end{eqnarray}
where $\operatorname{Aut}(G)$ is the set of symmetries of $G$, \ie rotations that keep it unchanged (it is a subgroup of $\ZZ / 2n$).
\end{itemize}
Roughly speaking, $ \left\langle , \right\rangle $ counts parametrized configurations of arrows, while $(,)$ counts unordered sets of arrows. Notice that $ \left\langle , \right\rangle $ is still symmetric (hence a scalar product). Obviously, the two definitions coincide when one deals with long knots (and thus \textit{based} Gauss diagrams).

The second version was already defined in \citep{PolyakViro} (though their Theorem $2$ is stated in terms of $(,)$), but it was O.P.\"{O}stlund who first formally stated that $ \left\langle , \right\rangle $ is more convenient to get nice properties when dealing with Gauss diagrams with symmetries (\citep{Ostlund}, sections $2.2$ and $2.4$). The results that we present here confirm this fact.

The pairing $ \left\langle , \right\rangle $ is also used in \citep{Grishanov}, and implicitly in \citep{Vassiliev}. 

\subsection{Arrow diagram spaces}\label{sec:arrows}
Take a Gauss diagram $G$ and forget the signs associated with the arrows. We call what remains an \textit{arrow diagram} (see \citep{P1}; beware that the terminology in \citep{GPV} is different: arrows in arrow diagrams are signed). Arrow diagram spaces $\mathcal{A}_{n}$, $\mathcal{A}_{\leq n}$ and $\mathcal{A}$, and the pairings $(,)$, $ \left\langle , \right\rangle $ are defined similarly to the previous section.

The raison d'\^{e}tre of this notion lies in the following map: take an arrow diagram $A\in \mathcal{A}_n$ and number its arrows from $1$ to $n$. Then any map \mbox{$\sigma:\left\lbrace 1,\ldots,n \right\rbrace \rightarrow\left\lbrace \pm 1\right\rbrace$} defines a Gauss diagram $A^\sigma$. Let $\operatorname{sign}(\sigma)$ be the product of all the $\sigma(i)$'s. We put: 
\begin{eqnarray}\label{eq3}
S(A)\stackrel{\mathrm{def}}{=}\sum_{\sigma\in \left\lbrace \pm 1\right\rbrace^n}\operatorname{sign}(\sigma)\cdot A^\sigma.
\end{eqnarray}
$S$ extends linearly into a map $\mathcal{A}_n\rightarrow\mathcal{G}_n$. A Gauss diagram formula that lies in the image of this map is called an \textit{arrow diagram formula}. A lot of the explicit formulas that have been found so far are actually arrow diagram formulas -- as well in the framework of knots in $\SS^3$.

Considering this map is relevant only in the context of the $ \left\langle , \right\rangle $ pairing (\ref{eq2}). Indeed, one may define (as most authors do) brackets $(\!(A,G)\!)$ and \mbox{$ \left\langle \! \left\langle A,G \right\rangle \! \right\rangle $}, with $A\in \mathcal{A}$ and $G\in \mathcal{G}$ in the following way: for every subdiagram (\ie unordered set of arrows) of $G$ that becomes $A$ after one forgets its signs, form the product of these signs. Sum up all these products, and call the result $(\!(A,G)\!)$. On the other hand, put $ \left\langle \! \left\langle A,G \right\rangle \! \right\rangle :=\abs{\operatorname{Aut}(A)}
\cdot(\!(A,G)\!)$. Then, of the naturally expected relations
$$\begin{array}{cccc}
(\!(A,G)\!) & \stackrel{?}{=} & (S(A),I(G)) &\text{ and}\\
 \left\langle \! \left\langle A,G \right\rangle \! \right\rangle  & \stackrel{?}{=} &  \left\langle S(A),I(G) \right\rangle , &
\end{array}$$
only the second one holds true, while the first one needs the assumption that $A$ has no symmetries (see Lemma \ref{lem:brackets}).

A special interest arises in arrow diagram formulas in the case of virtual knot theory, as we shall see in the next subsection.

\subsection{Virtual knot invariants}

Virtual knot theory arises as the natural \enquote{completion} of classical knot theory with respect to Gauss diagrams. Indeed, while a knot diagram may be \textit{represented} by a Gauss diagram (with corresponding Reidemeister moves on Gauss diagrams), a virtual knot diagram actually \textit{is} a Gauss diagram. New (\enquote{virtual}) crossings are used as an artefact to draw planar representations of them, and the additional \textit{virtual Reidemeister moves} are precisely those planar moves that do not affect the underlying Gauss diagram (see \citep{KauffmanVKT}).

\subsubsection{Classical \textit{vs} virtual invariants}
One should be extremely cautious about the fact that the so-called \enquote{real} (or classical) Reidemeister moves for Gauss diagrams may not always be actually performed: for instance, two arrows may be added by Reidemeister II in the \textit{real} settings only if the corresponding arcs of the knot diagram face each other -- which seems not easy to check on the Gauss diagram.

As a consequence, the framework introduced previously seems mostly comfortable to look for virtual knot invariants.

A natural related question is whether a given Gauss diagram formula for classical knots always defines an invariant for virtual knots by the same \mbox{equation (\ref{eq1}).} The answer is negative, the simplest example is the formula for the invariant $v_3$ given by  \citep{PolyakViro} (Theorem 2), which we reproduce with an example of non invariance on Fig.\ref{pic:v3}.

\begin{figure}[h!]
\centering 
\psfig{file=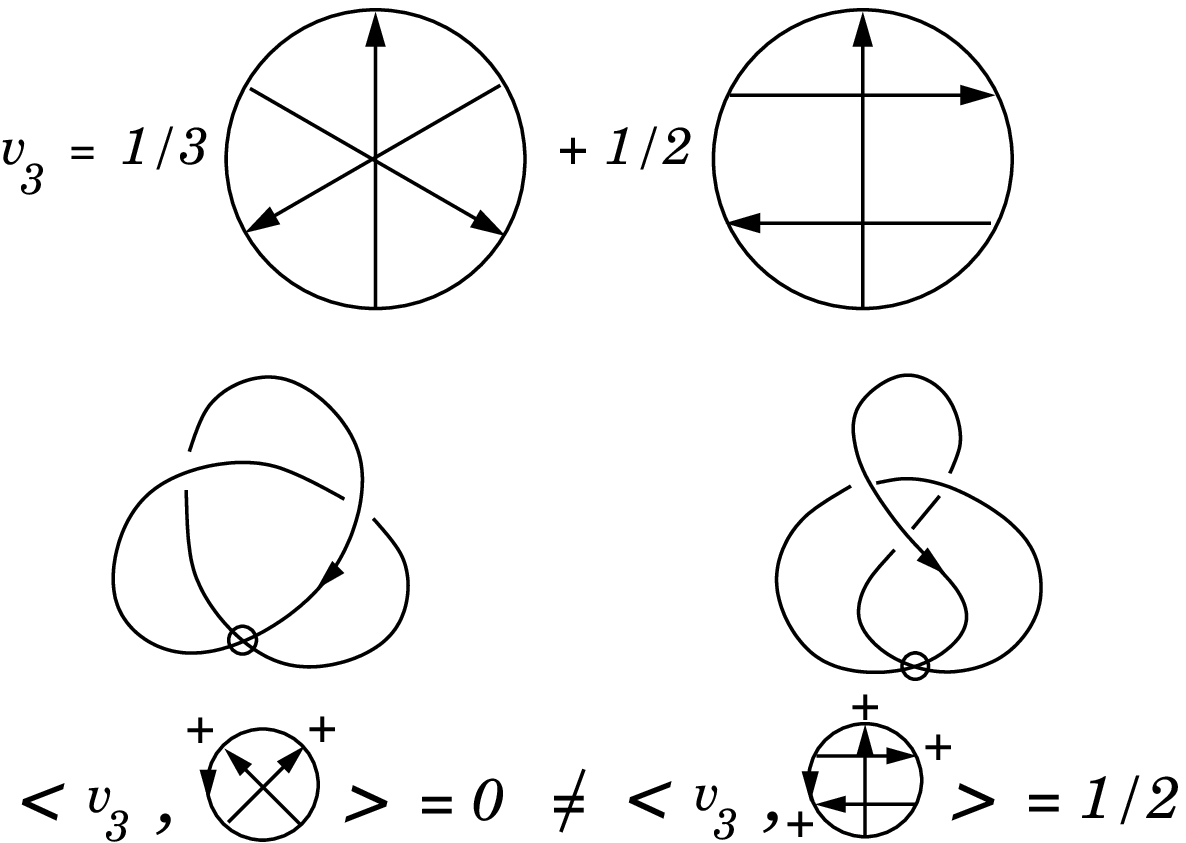,scale=0.7}
\caption{Polyak-Viro's formula for $v_3$ is not a virtual invariant}\label{pic:v3}
\end{figure}

\subsubsection{Homogenous virtual invariants}\label{sec:homogeneous}
\begin{definition}
For each $n\in \NN$, there is an orthogonal projection $\pi_n : \mathcal{G}\rightarrow \mathcal{G}_n$ with respect to the scalar product $(,)$. For $G\in \mathcal{G}$, there is some integer $n$ such that $G\in \mathcal{G}_{\leq n}\setminus \mathcal{G}_{\leq n-1}$. The \textit{principal part} of $G$ is defined by $\pi_n (G)$.
$G$ is called \textit{homogeneous} if it is equal to its principal part.
\end{definition}

\begin{lemma}\label{PrincipalPart}
Let $G\in \mathcal{G}$ be a Gauss diagram formula for \textit{virtual} knots. Then its principal part lies in the image of the map $S$ defined by (\ref{eq3}), \ie can be represented by a (homogeneous) arrow polynomial.
\end{lemma}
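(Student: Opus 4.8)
The plan is to reduce the statement to a purely algebraic fact about how the sign-reversal symmetry constrains homogeneous Gauss diagram invariants, and then to identify the image of $S$ as exactly the subspace cut out by that symmetry. First I would set up the correct dual picture: a Gauss diagram formula in the sense of (\ref{eq1}) is given by pairing $I(\mathtt{G})$ with a fixed element $G\in\mathcal{G}$, so "being a virtual knot invariant" is a linear condition on $G$, namely invariance of $w(G,I(-))$ under all the R-relations of Fig.~\ref{pic:Rmoves}. I would isolate the part of this condition coming from Reidemeister~I and from the two "writhe-flip" type moves --- more precisely, the moves relating a Gauss diagram to the one obtained by reversing the sign (writhe) of a single arrow whose endpoints are adjacent on the circle, or cancelling such a pair. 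The key observation is that, after projecting to the top degree $n$ (where $G\in\mathcal{G}_{\le n}\setminus\mathcal{G}_{\le n-1}$), the lower-order terms created by these moves disappear, so the surviving constraint on $\pi_n(G)$ is that its pairing with $I(-)$ is unchanged under reversing the writhe of any single arrow, up to the appropriate sign bookkeeping.

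The second step is to translate that surviving constraint into membership in $\operatorname{im}(S)$. Here I would use the map $S$ of (\ref{eq3}) together with the bracket formalism and Lemma~\ref{lem:brackets}: the identity $\left\langle S(A),I(G)\right\rangle = \left\langle\!\left\langle A,G\right\rangle\!\right\rangle$ shows that every element of $\operatorname{im}(S)$, paired via $\left\langle,\right\rangle$ against $I(-)$, only ever sees \emph{arrow} diagrams, i.e.\ it is "blind to writhes" in exactly the antisymmetrized way. Conversely, a homogeneous $P\in\mathcal{G}_n$ whose pairing against $I(-)$ is a function of the underlying arrow diagrams alone must, coefficient by coefficient, be a signed average over the $2^n$ writhe assignments of some arrow diagram with multiplicity --- which is precisely a $\QQ$-linear combination of the $S(A)$. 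So I would prove $\operatorname{im}(S)$ equals the kernel of the linear "writhe-reversal antisymmetrizer" on $\mathcal{G}_n$, and then show the homogeneous part of an honest virtual invariant lands in that kernel. The switch from $(,)$ to $\left\langle,\right\rangle$ matters exactly at the point of Lemma~\ref{lem:brackets} --- the relation $\left\langle\!\left\langle A,G\right\rangle\!\right\rangle=\left\langle S(A),I(G)\right\rangle$ holds without any symmetry hypothesis on $A$, whereas its $(,)$ analogue does not, so working with the normalized pairing is what makes the surjectivity-onto-the-kernel argument go through for arrow diagrams with automorphisms.

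The main obstacle I anticipate is the bookkeeping in the first step: showing rigorously that the \emph{only} part of the virtuality conditions that survives in top degree $n$ is the writhe-reversal symmetry, and that the Reidemeister~II/III corrections genuinely live in $\mathcal{G}_{\le n-1}$ and hence are killed by $\pi_n$. One has to be careful that "virtual knot invariant" imposes invariance under \emph{all} the moves of Fig.~\ref{pic:Rmoves} with global marking $K$, and to check that no degree-$n$ relation among these (other than writhe reversal) constrains $\pi_n(G)$ in a way not already implied. I would handle this by writing each R-move as an explicit element of $\mathcal{G}_{\le n}$, computing its image under $I^{-1}$ (or equivalently tracking subdiagrams), and observing that the degree-$n$ component of an R-II or R-III relation, once expanded, is supported on diagrams with a cancelling adjacent pair or a "triangle" that $I$ sends to strictly lower degree after the pairing is applied --- leaving the R-I / writhe-flip relations as the sole top-degree constraint. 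Once that reduction is in place, the identification with $\operatorname{im}(S)$ is essentially the content of the bracket lemma plus a dimension count on $\mathcal{G}_n$.
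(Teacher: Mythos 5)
Your plan is essentially the paper's proof: the paper invokes Theorem~\ref{thm:PolyakAlg} to say that a Gauss diagram formula satisfies the $\mathrm{P}_2$ relations, notes that since $G$ has no summands above its top degree $d$ the lower-order corrections vanish and $\pi_d(G)$ must satisfy the homogeneous $\mathrm{P}_2^{(d-1),1}$ (single-arrow writhe-reversal) relations, and concludes by Lemma~\ref{ImS} --- which is precisely your identification of $\operatorname{im}(S)$ with the kernel of the writhe-reversal antisymmetrizer. The only differences are matters of packaging: you propose to redo by hand the $I$-bookkeeping and the characterization of $\operatorname{im}(S)$ that the paper has already isolated as Theorem~\ref{thm:PolyakAlg} and Lemma~\ref{ImS}, and you worry about proving that writhe-reversal is the \emph{only} surviving top-degree constraint, which is unnecessary since the lemma is a one-way implication.
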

\begin{corollary}
Any \textit{homogeneous} Gauss diagram formula for virtual knots is an arrow diagram formula.
\end{corollary}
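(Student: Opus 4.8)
The plan is to deduce the corollary from Lemma~\ref{PrincipalPart} essentially verbatim. Let $\phi$ be a homogeneous Gauss diagram formula for virtual knots; by definition it has the shape $\phi(\mathtt G)=w(G,I(\mathtt G))$ for some $G\in\mathcal G$, and the word \emph{homogeneous} means precisely that $G$ equals its own principal part. Concretely, fix the integer $n$ with $G\in\mathcal G_{\leq n}\setminus\mathcal G_{\leq n-1}$; then the homogeneity hypothesis reads $G=\pi_n(G)\in\mathcal G_n$.

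Now apply Lemma~\ref{PrincipalPart} to $\phi$, which is a Gauss diagram formula for virtual knots: it tells us that the principal part $\pi_n(G)$ lies in the image of the map $S\colon\mathcal A_n\to\mathcal G_n$ of~(\ref{eq3}). Since $G=\pi_n(G)$, this gives $G\in\operatorname{Im}S$, say $G=S(A)$ with $A\in\mathcal A_n$ a homogeneous arrow polynomial. By the definition recalled in Section~\ref{sec:arrows}, a Gauss diagram formula whose representing element lies in $\operatorname{Im}S$ is an arrow diagram formula; hence so is $\phi$, represented by $A$ through the same pairing~$w$.

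I do not expect a genuine obstacle: the corollary is a formal consequence of the lemma, all the substance being contained there. Two small points deserve care. First, the adjective \emph{homogeneous} is a property of the representing element $G\in\mathcal G$ — it asks $G=\pi_n(G)$ — not of $\phi$ seen merely as a function on virtual knots, since different elements of $\mathcal G$ may induce the same invariant; it is this representative that the argument shows to be in $\operatorname{Im}S$. Second, although the notion of arrow diagram formula is most natural for the pairing $\left\langle\,,\,\right\rangle$ (see the discussion preceding Lemma~\ref{lem:brackets}), the statement as phrased requires nothing of the kind and holds for whichever orthogonal pairing $w$ the given formula happens to use.
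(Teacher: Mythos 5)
Your proof is correct and follows exactly the route the paper intends: the corollary is an immediate consequence of Lemma~\ref{PrincipalPart}, since a homogeneous $G$ coincides with its principal part and hence lies in $\operatorname{Im}S$. The paper gives no separate argument for the corollary, and your careful remarks (homogeneity being a property of the representative, independence of the choice of pairing) are consistent with its conventions.
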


The above result in the context of knot theory in the sphere is contained in the lines \citep[section $3.1$]{GPV}, and the proof in our context contains no new ideas. What is new is that the converse is also true, in some sense:

\begin{theorem}\label{thm:ArrowHomogenous}
Let $\mathcal{IA}_{\leq n}$ be the space of arrow diagram formulas for virtual knots of degree no greater than $n$. Then:
$$\mathcal{IA}_{\leq n}=\bigoplus_{k\leq n} \left( \mathcal{IA}_{\leq n}\cap \mathcal{A}_{k}\right).$$
\end{theorem}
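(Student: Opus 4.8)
The plan is to show that the space $\mathcal{IA}_{\leq n}$ of arrow diagram formulas for virtual knots is graded by arrow-number, i.e.\ that if an arrow polynomial $P = P_0 + P_1 + \cdots + P_n$ (with $P_k \in \mathcal{A}_k$) defines a virtual knot invariant via $S$, then each homogeneous piece $P_k$ does too. The natural approach is to produce, for each degree $k$, a linear operator $\Phi_k$ on $\mathcal{A}_{\leq n}$ that projects onto $\mathcal{A}_k$ and preserves invariance. Concretely, the idea is to run a scaling/rescaling argument on Gauss diagrams: introduce a formal operation that multiplies a diagram by a parameter raised to its arrow-number, exploit the fact that the $\mathrm{R}$-moves of Fig.~\ref{pic:Rmoves} (and the virtual moves) respect the natural filtration, and extract each graded piece by a Vandermonde-style interpolation over enough values of the parameter.

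First I would make precise the statement that $S$ intertwines the arrow-degree filtration on $\mathcal{A}$ with the degree filtration on $\mathcal{G}$: an arrow diagram $A$ with $k$ arrows is sent to a sum of Gauss diagrams each with $k$ arrows, so $S(\mathcal{A}_k) \subseteq \mathcal{G}_k$ and $S(\mathcal{A}_{\leq n}) \subseteq \mathcal{G}_{\leq n}$. Next I would recall from Lemma~\ref{PrincipalPart} and its corollary that being a \emph{virtual} invariant is detected, after applying $I$, by orthogonality (with respect to $\langle\,,\,\rangle$) to the span of all $\mathrm{R}$-move differences and virtual-move differences; call this subspace $\mathcal{R} \subseteq \mathcal{G}$. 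The key structural observation I would establish is that $\mathcal{R}$ is itself compatible with the degree filtration in the appropriate sense — more precisely, that the lowest-degree part (principal part) of any element of $\mathcal{R}$ again comes from a relation, so that orthogonality to $\mathcal{R}$ can be checked degree by degree. This is essentially the content already extracted in the GPV-style argument behind Lemma~\ref{PrincipalPart}, now applied not just to the top piece but inductively down the filtration.

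The inductive step then runs as follows. Given a virtual invariant represented by $P = P_0 + \cdots + P_n$ with $P_n \neq 0$, Lemma~\ref{PrincipalPart} (or rather the reasoning behind it) shows $P_n$ alone is a virtual invariant — this handles the top degree. Then $P - P_n$ is represented by $S(P) - S(P_n)$, and since both $S(P)$ and $S(P_n)$ pair correctly against $\mathcal{R}$ after $I$, so does their difference; hence $P_0 + \cdots + P_{n-1}$ is again a virtual invariant, and we recurse. Making this rigorous requires checking that subtracting $S(P_n)$ does not destroy the "arrow diagram formula" property of the remainder, which is immediate since $S$ is linear and $P - P_n \in \mathcal{A}_{\leq n-1}$, and that invariance descends — this is where the filtration-compatibility of $\mathcal{R}$ is used.

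The main obstacle I anticipate is proving that $\mathcal{R}$ respects the filtration in the strong sense needed — that is, that the degree-$k$ graded piece of a virtual-move or $\mathrm{R}$-move relation is realized by relations among degree-$k$ diagrams, \emph{including} the subtle point flagged in the excerpt that classical Reidemeister moves cannot always be performed virtually, so one must work only with those moves that genuinely hold for Gauss diagrams qua virtual diagrams. Here the homological markings complicate the bookkeeping (the $\mathrm{R}$-moves depend on $K$), but since the markings do not change the combinatorial degree of a diagram, the filtration argument should go through unchanged; the orientability hypothesis mentioned for Theorem~\ref{thm:main} is not needed here. A secondary technical point is the interplay between $(\!(\,,\,)\!)$ and $\langle S(-), I(-)\rangle$ for arrow diagrams with symmetries — but Lemma~\ref{lem:brackets} (quoted above) guarantees the clean identity $\langle\!\langle A,G\rangle\!\rangle = \langle S(A), I(G)\rangle$ holds unconditionally, so the $\langle\,,\,\rangle$-framework is exactly the right one and no symmetry caveats intrude.
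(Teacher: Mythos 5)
There is a genuine gap, and it sits exactly where you wave your hands. Your reduction (prove the top graded piece $P_n$ is again an invariant, subtract, recurse) is the same skeleton as the paper's proof, but your justification of the key step is circular. You assert that the span $\mathcal{R}$ of relations ``respects the filtration'' so that orthogonality can be checked degree by degree, and you claim this ``is essentially the content already extracted in the GPV-style argument behind Lemma~\ref{PrincipalPart}.'' It is not. Lemma~\ref{PrincipalPart} only concerns the $\mathrm{P}_2^{(n-1),1}$ sign-reversal relations, which are the easy, genuinely degree-by-degree part; it concludes that $\pi_d(G)$ lies in the image of $S$, not that it is an invariant. The real obstruction is the $8T$ relation $\mathrm{P}_3$, which is \emph{inhomogeneous}: with $m$ unseen arrows it has summands in degrees $m+3$ and $m+2$. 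Pairing $S(A)$ against it gives one equation mixing $\langle S(A_{m+3}),\mathrm{P}_3^{(m),3}\rangle$ with $\langle S(A_{m+2}),\mathrm{P}_3^{(m),2}\rangle$, and nothing forces the two terms to vanish separately. Taking $m=d-2$ does give $\langle S(A_d),\mathrm{P}_3^{(d-2),2}\rangle=0$ for free (since $A_{d+1}=0$), but the relations $\mathrm{P}_3^{(d-3),3}$ --- the $2T$ part --- are the top-degree piece of $8T$ relations whose other piece pairs nontrivially with $A_{d-1}$, so your argument cannot reach them. Your ``main anticipated obstacle'' (classical vs.\ virtual moves, homological markings) is not where the difficulty lies.

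The paper closes this gap with Lemma~\ref{crucial2T6T}, a nontrivial combinatorial identity showing that $\operatorname{Span}(\mathrm{AP}_3^{(n-3),3})$ is contained in the span of the $A6T$ relations $\mathrm{AP}_3^{(n-2),2}$ together with $\mathrm{AP}_2^{(n-2),2}$ (a specific linear combination of eight $A6T$ relations, with coefficients $1$ and $\tfrac12$, modulo $\mathrm{P}_2^{(n-2),2}$). Both of those families \emph{are} satisfied by $A_d$ for the easy filtration reasons, and Lemma~\ref{SpanOrth} transports the statement back and forth between $A_d$ satisfying $\mathrm{AX}$ and $S(A_d)$ satisfying $\mathrm{X}$. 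Note that this rescue happens at the level of \emph{arrow} diagrams, not Gauss diagrams --- which is consistent with the fact that the theorem is about arrow diagram formulas and fails for general Gauss diagram formulas (cf.\ the $v_3$ example of Fig.~\ref{pic:v3}). Without an argument of this kind your induction does not close. A secondary slip: you call the principal part the ``lowest-degree part''; in the paper it is the top-degree part, which is the only piece the filtration argument reaches directly.
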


\subsection{The Polyak algebra}

A Gauss sum $G\in \mathcal{G}$ defines a virtual knot invariant if and only if the function $ \left\langle G,I(.) \right\rangle $ is well defined on the quotient of $\mathcal{G}$ by Reidemeister moves on Gauss diagrams. Hence it is interesting to understand the image of that subspace under the map $I$ with a simple family of generators. This is the idea that led the construction of the Polyak algebra (\citep{P1,GPV}) in the classical case. We adapt this construction and define $\mathcal{P}$ as the quotient of $\mathcal{G}$ by the set of relations shown in Fig.\ref{pic:Pmoves}, which we call $\mathrm{P}_1$, $\mathrm{P}_2$ and $\mathrm{P}_3$ (also $8T$) relations for Gauss diagrams.

\begin{figure}[h!]
\centering 
\psfig{file=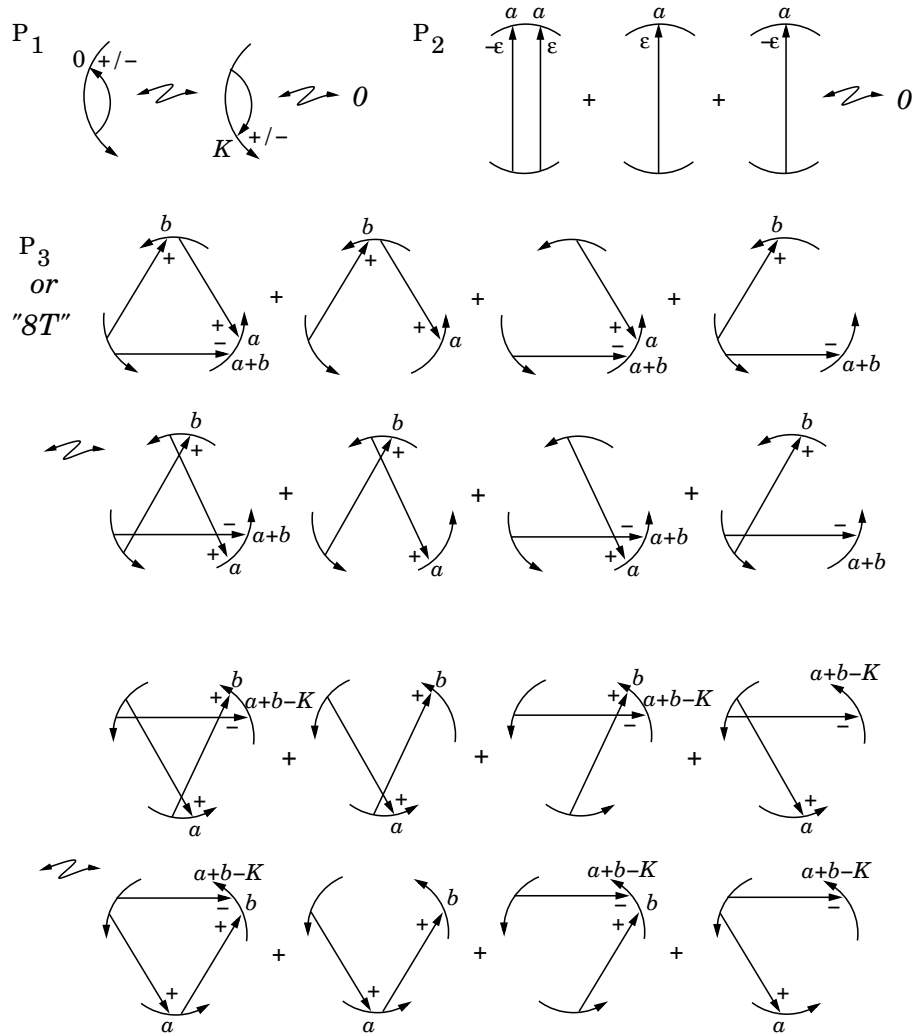,scale=0.77}
\caption{Relations defining the Polyak algebra in the solid torus framework}\label{pic:Pmoves}
\end{figure}

 The following theorem repeats Theorem $2.D$ from \citep{GPV} -- the proof is similar.

\begin{theorem}\label{thm:PolyakAlg}
The map $I$ induces an isomorphism $\mathcal{G}_{\text{\!\reflectbox{$\setminus$}}\mathrm{R}}\rightarrow \mathcal{G}_{\text{\!\reflectbox{$\setminus$}}\mathrm{P}}\stackrel{def}{=}\mathcal{P}$, where $\mathrm{R}$ stands for the Reidemeister relations on Gauss diagrams. More precisely, $I$ induces an isomorphism between $\operatorname{Span}(\mathrm{R}_i)$ and $\operatorname{Span}(\mathrm{P}_i)$, for $i=1,2,3$.
\end{theorem}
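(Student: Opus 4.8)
The plan is to exploit that $I$ is already a linear isomorphism of $\mathcal{G}$: it is \enquote{unitriangular} for the filtration by degree, since $I(\mathtt{G})=\mathtt{G}+(\text{terms of smaller degree})$, so it is invertible on each $\mathcal{G}_{\leq n}$ and hence on the limit, with inverse the inclusion--exclusion map $I^{-1}(\mathtt{G})=\sum_{\mathtt{G}'\subseteq\mathtt{G}}(-1)^{|\mathtt{G}\setminus\mathtt{G}'|}\mathtt{G}'$. An isomorphism carrying one subspace bijectively onto another induces an isomorphism of the quotients, so the theorem amounts to proving $I(\operatorname{Span}(\mathrm{R}_i))=\operatorname{Span}(\mathrm{P}_i)$ for $i=1,2,3$; summing over $i$ then gives $I(\operatorname{Span}(\mathrm{R}))=\operatorname{Span}(\mathrm{P})$ and the coarse statement. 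Each of these three identities splits in turn into the inclusion $I(\operatorname{Span}(\mathrm{R}_i))\subseteq\operatorname{Span}(\mathrm{P}_i)$ and its counterpart $I^{-1}(\operatorname{Span}(\mathrm{P}_i))\subseteq\operatorname{Span}(\mathrm{R}_i)$.

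For the first inclusion I would run the elementary computation of \citep[Theorem~2.D]{GPV}. A generator $\mathtt{G}_1-\mathtt{G}_2$ of $\mathrm{R}_i$ is given by two Gauss diagrams agreeing outside a small disk $D$, inside which sit the $k$ \enquote{active} arrows of the move ($k=1,2,3$ for $i=1,2,3$). Expanding $I(\mathtt{G}_1)-I(\mathtt{G}_2)$ as a signed sum of subdiagrams, every subdiagram meeting $D$ in none of the active arrows occurs with both signs and cancels, so
$$I(\mathtt{G}_1-\mathtt{G}_2)=\sum_{\mathtt{H}}\Bigl(\mathtt{H}\cup L_{\mathtt{H}}\Bigr),$$
where $\mathtt{H}$ ranges over the subdiagrams of the common part and $L_{\mathtt{H}}$ is a fixed $\QQ$-combination of nonempty subsets of active arrows, the same for every $\mathtt{H}$ up to reattachment. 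For $i=1$ this combination is the single active arrow, and reattaching $\mathtt{H}$ produces exactly the generators of $\operatorname{Span}(\mathrm{P}_1)$ (Gauss diagrams with the prescribed isolated arrow); for $i=2$ it is the two-arrow \enquote{$\mathrm{P}_2$} combination; for $i=3$ the surviving terms assemble, after the above cancellations, into the eight-term relation $\mathrm{P}_3=8T$ of Fig.~\ref{pic:Pmoves} -- the same linear-algebra identity as in \citep{GPV}. The reverse inclusion is obtained by running $I^{-1}$ on a generator of $\mathrm{P}_i$: the resulting alternating sum of subdiagrams telescopes (downward induction on the size of the \enquote{outside} part) into a combination of $\mathrm{R}_i$-generators.

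The only genuinely new ingredient relative to the sphere case is the bookkeeping of the homological markings. Every arrow carries an integer marking, $K$ is the fixed global marking, and the moves of Fig.~\ref{pic:Rmoves} force specific markings on the active arrows (the $\mathrm{R}_1$ arrow gets a $K$-dependent marking; the two $\mathrm{R}_2$ arrows get opposite markings; the three $\mathrm{R}_3$ markings are constrained by the condition that the three strands close up in the annulus). Since $I$ and $I^{-1}$ only delete and restore arrows without ever moving an endpoint, markings are simply transported along, so checking that the markings appearing in $L_{\mathtt{H}}$ are precisely those in the definition of $\mathrm{P}_1,\mathrm{P}_2,\mathrm{P}_3$ -- and that no others appear -- is a finite, move-by-move verification. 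I expect this step, together with confirming the eight-term cancellation for $\mathrm{R}_3/\mathrm{P}_3$ with markings attached, to be the main (indeed the only real) obstacle; it is exactly what is meant by \enquote{the proof is similar} to \citep{GPV}. Orientability of the annulus is used only to make these markings well defined along the moves and is otherwise invisible.
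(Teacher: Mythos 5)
Your proposal is correct and follows essentially the same route as the paper, which gives no written proof of this theorem but simply defers to Theorem~2.D of \citep{GPV} with the remark that \enquote{the proof is similar}: the unitriangularity of $I$, the cancellation of subdiagrams containing no active arrow, the assembly of the survivors into the $\mathrm{P}_1$, $\mathrm{P}_2$ and $8T$ generators, and the triangular/telescoping argument for surjectivity are exactly the GPV argument, and the move-by-move verification of the homological markings is precisely the adaptation the paper leaves implicit.
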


\subsection{Based and degenerate diagrams}

A \textit{based} Gauss diagram is a Gauss diagram together with a distinguished (\textit{base}) arc on the circle, \ie a region between two consecutive ends of arrows. Based arrow diagrams are defined similarly. The corresponding spaces are denoted by $\mathcal{G}_\bullet$ and $\mathcal{A}_\bullet$, in reference to the dot that we use in practice to pinpoint the distinguished arc.

A \textit{degenerate Gauss diagram (with one degeneracy)} is a classical Gauss diagram in which one of the $2n$ arcs of the base circle has been shrunk to a point. When the arc was bounded by the two endpoints of one and the same arrow, the degenerate diagram is decorated with the datum of which endpoint was before the other. In this way, there is a natural $1$-$1$ correspondence between based and degenerate diagrams. The spaces of degenerate diagrams are called $\mathcal{DG}$ and $\mathcal{DA}$ respectively. The latter is meant to be quotiented by the so-called \textit{triangle relations}, shown in Fig.\ref{pic:triangle}.
The quotient space is denoted by $\mathcal{DA}_{/\nabla}$.

\begin{figure}[h!]
\centering 
\psfig{file=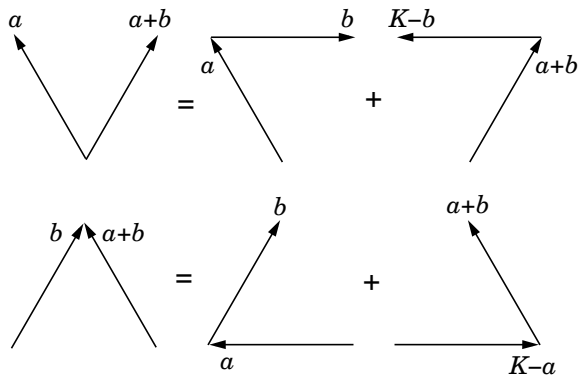,scale=0.64}
\caption{The triangle relations}\label{pic:triangle}
\end{figure}

This notion in the context of $\SS^3$ is due to M. Polyak, and is part of his conjecture which we discuss in the next section.

\subsection{Polyak's conjecture}
During Swiss Knots conference in 2011, Michael Polyak gave a talk in which he conjectured that Gauss diagram formulas for knots in $\SS^3$ were the space of solutions to an equation of type $d(\cdot)=0$ with $d$ valued in $\mathcal{DA}_{/\nabla}$. We shall not give a formal statement of this here, since it was never written by its author, but there is a video of the talk available online (\citep{PolyakTalk}). The map $d$ from that conjecture has the property of being homogeneous:
$$\forall G\in \mathcal{G}, \left[ d(G)=0 \Longleftrightarrow \forall n\in \NN, d\circ \pi_n (G)=0 \right].$$
It follows from \ref{sec:homogeneous} that, in the virtual setting, the best that we may expect from such a map is to detect \textit{arrow} diagram formulas. For this reason, from now on we mostly restrict our attention to this kind of invariants. We construct a map $d$ in that framework, that differs from M.Polyak's one by the signs in front of the contributing diagrams. Then we prove that its kernel encodes Reidemeister III invariance, while Reidemeister I and II are already very easy to check.

\begin{remark}Based on our understanding of the conjecture, Polyak-Viro's formula for $v_3$ (Fig.\ref{pic:v3}) is a counterexample in the \enquote{classical} settings: it defines an invariant of usual knots, but it does not have a trivial boundary, no matter how the signs are chosen to compute it. Hence the present result seems to be the best one can hope for.
\end{remark}

\section{Main results}

\subsection{A set of equations for virtual arrow diagram formulas}

In this section we define a map $d$ that will fit in some version of Polyak's conjecture for virtual knots in the solid torus.

\subsubsection*{Homogenous Polyak relations}
Let $G\in\mathcal{G}_n$. Then $G$ satisfies the $\mathrm{P}_i$ relations (\ie $\left\langle G,\mathrm{P}_i\right\rangle =0)$) if and only if it satisfies the \textit{homogeneous} relations $\left\langle G,\pi_k(\mathrm{P}_i)\right\rangle =0)$ for all $k$. These are denoted by $\mathrm{P}_1$, $\mathrm{P}_2^{(n-1),1}$, $\mathrm{P}_2^{(n-2),2}$, $\mathrm{P}_3^{(n-2),2}$ (or $G6T$) and $\mathrm{P}_3^{(n-3),3}$ (or $G2T$) -- some examples are shown on Fig.\ref{pic:homogeneous}. The parenthesized numbers indicate in each case how many arrows are unseen.

\begin{figure}[h!]
\centering 
\psfig{file=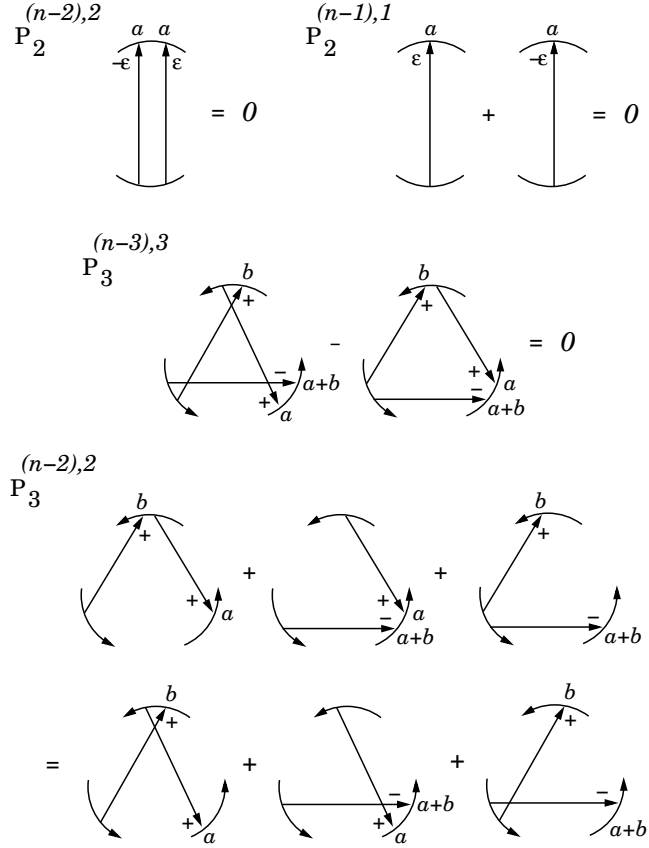,scale=0.6}
\caption{Some homogeneous Polyak relations}\label{pic:homogeneous}
\end{figure}

\begin{lemma}\label{ImS}
Let $G\in\mathcal{G}$. Then $G$ lies in the image of the map $S$ if and only if $G$ satisfies all the homogeneous relations $\left\langle G,\mathrm{P}_2^{(n-1),1}\right\rangle =0$.
\end{lemma}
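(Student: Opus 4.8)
The plan is to characterize the image of $S$ by a direct combinatorial analysis of what $S(A)$ looks like, and then compare with the defining property of the relations $\mathrm{P}_2^{(n-1),1}$. Recall that $S(A) = \sum_{\sigma\in\{\pm1\}^n}\operatorname{sign}(\sigma)A^\sigma$, so $S$ assigns alternating signs to the $2^n$ sign-assignments on the arrows of an $n$-arrow diagram $A$. The key structural observation is that $S$ behaves well with respect to "reversing a single arrow": if $G$ is any Gauss diagram and $G'$ is obtained from $G$ by flipping the writhe of one chosen arrow, then in $S(A)$ the coefficients of $G$ and $G'$ are negatives of each other, because the underlying arrow diagram (and hence the $\operatorname{sign}$ of the full assignment up to that one flip) is the same. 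Conversely, a Gauss sum with this property for \emph{every} arrow of \emph{every} diagram it involves is exactly an element of $\operatorname{Im}(S)$: one can reconstruct $A$ from $G$ by averaging, i.e. $A = 2^{-n}\sum_\sigma \operatorname{sign}(\sigma) G^{(\sigma)}$ for each homogeneous degree-$n$ piece. So the first step is to prove the equivalence
$$G\in\operatorname{Im}(S)\iff \text{for every arrow of every diagram in }G,\ \text{flipping that arrow's sign negates its coefficient.}$$

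**Relating the sign-flip condition to $\mathrm{P}_2^{(n-1),1}$.**
Next I would unwind the definition of the homogeneous relation $\mathrm{P}_2^{(n-1),1}$: this is the principal (top-degree) part of a Reidemeister-II relation in which $n-1$ arrows are "unseen" (untouched) and a single pair of arrows is created — but in the homogeneous $\pi_k$-decomposition the piece $\mathrm{P}_2^{(n-1),1}$ with one arrow "seen" is precisely the pair of diagrams $G_+$ and $G_-$ that differ only in the sign of that one distinguished arrow, summed (so the relation reads $\langle G, G_+ + G_-\rangle = 0$, i.e. the coefficients of $G_+$ and $G_-$ in the $I$-expansion are opposite). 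Here I need to be careful to use the normalized pairing $\langle\,,\rangle$ and the fact (Theorem~\ref{thm:PolyakAlg}, via $I$) that testing $\langle G,\mathrm{P}_2^{(n-1),1}\rangle=0$ for all such relations is the statement that in $I(\mathtt{G})$-free terms — equivalently, working directly with $G$ written in the Gauss-diagram basis — every diagram appears with a coefficient that is odd under flipping any single arrow. Matching this with the characterization from the first step gives exactly $G\in\operatorname{Im}(S)$.

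**The main obstacle.**
I expect the genuine difficulty to be the bookkeeping around symmetries and the normalized pairing. The map $S$ can send a symmetric arrow diagram $A$ (with $\operatorname{Aut}(A)\neq 0$) to a Gauss sum in which several a priori distinct labelled sign-assignments $A^\sigma$ coincide as unlabelled Gauss diagrams, so the naive "averaging" reconstruction $A = 2^{-n}\sum_\sigma\operatorname{sign}(\sigma)G^{(\sigma)}$ needs the $|\operatorname{Aut}|$ correction factors to be consistent — this is exactly why the lemma is stated with the homogeneous $\mathrm{P}_2^{(n-1),1}$ relations and the $\langle\,,\rangle$ pairing rather than $(,)$, and it is the same phenomenon flagged in the discussion of the brackets $(\!(\,,)\!)$ versus $\langle\!\langle\,,\rangle\!\rangle$ before Lemma~\ref{lem:brackets}. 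Concretely, I would handle this by first proving the equivalence on the based/long-knot level (where $\operatorname{Aut}$ is trivial and both pairings agree), and then descending: a Gauss sum $G\in\mathcal{G}_n$ lies in $\operatorname{Im}(S)$ iff its pullback to $\mathcal{G}_\bullet$ under the forgetful map does, and the sign-flip condition is manifestly compatible with this pullback. The only thing to check carefully is that the homogeneous relation $\mathrm{P}_2^{(n-1),1}$ upstairs pushes down to the correct relation downstairs with the $|\operatorname{Aut}|$ weights — which is precisely the content that makes $\langle\,,\rangle$, and not $(,)$, the right pairing here.
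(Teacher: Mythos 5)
Your overall strategy is the same as the paper's: both directions reduce to the observation that $\mathrm{P}_2^{(n-1),1}$ is the sum $D_++D_-$ of two diagrams differing only in the sign of one arrow, and that the image of $S$ consists exactly of the Gauss sums whose (suitably weighted) coefficients alternate under such a flip. You have also correctly located the only real subtlety, namely symmetric diagrams and the role of the normalized pairing.

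However, the concrete reconstruction step you propose for the converse direction fails as written. The formula $A = 2^{-n}\sum_\sigma\operatorname{sign}(\sigma)G^{(\sigma)}$ is wrong whenever the underlying arrow diagram has symmetries, and so is your preliminary claim that in $S(A)$ two Gauss diagrams differing by one sign flip have opposite coefficients in the orthonormal basis. Take for $A$ the degree-$2$ arrow diagram whose endpoints appear in cyclic order $t_1 h_1 t_2 h_2$ with both arrows carrying the same marking, so that $\operatorname{Aut}(A)\cong\ZZ/2$ exchanges the two arrows. As an element of $\mathcal{G}_2$ one has $S(A)=A^{++}-2\,A^{+-}+A^{--}$: the coefficients $1$ and $-2$ of $A^{++}$ and $A^{+-}$ are not opposite (only the weighted quantities $\abs{\operatorname{Aut}(A^{++})}\cdot 1=2$ and $\abs{\operatorname{Aut}(A^{+-})}\cdot(-2)=-2$ are), and your average returns $\frac{1}{4}(1+2+2+1)=\frac{3}{2}\neq 1$. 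Your fallback --- working with based diagrams, where $\operatorname{Aut}$ is trivial, and then descending --- could probably be made to work, but the descent step (that a based solution with the right $\abs{\operatorname{Aut}}$ weights is the pullback of an unbased one) is exactly where the difficulty lives, and you leave it unproved. The paper sidesteps all of this with a one-line device: it reconstructs $A$ as $\sum_{\mathtt{A}}(G,\mathtt{A}^+)\cdot\mathtt{A}$, reading off only the coefficient of the all-positive diagram $\mathtt{A}^+$. Since every symmetry of $\mathtt{A}$ fixes $\mathtt{A}^+$, one has $\operatorname{Aut}(\mathtt{A}^+)=\operatorname{Aut}(\mathtt{A})$, so the coefficient of $\mathtt{A}^+$ in $S(\mathtt{A})$ is exactly $1$ and no averaging or orbit-counting is needed; the remaining coefficients of $S(A)$ are then forced to agree with those of $G$ by iterating the relations $\left\langle G,D_++D_-\right\rangle=0$ one arrow at a time. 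I would replace your averaging step by this argument.
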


Homogenous relations are also defined for arrow diagram spaces. This time one should pay attention to signs, so we make a full list (Fig.\ref{pic:Arelations}). We denote them by $\mathrm{AP}_1$, $\mathrm{AP}_2^{(n-2),2}$, $\mathrm{AP}_3^{(n-2),2}$ (or $A6T$) and $\mathrm{AP}_3^{(n-3),3}$ (or $A2T$). The above lemma explains why $\mathrm{AP}_2^{(n-1),1}$ is useless (it writes $0=0$).

Let us mention here the following two crucial points in the proofs of Theorems~\ref{thm:ArrowHomogenous} and ~\ref{thm:main}.

\begin{lemma}\label{crucial2T6T} For all $n\geq 3$:
$$\operatorname{Span}(\mathrm{AP}_3^{(n-3),3}) \subseteq \operatorname{Span}(\mathrm{AP}_3^{(n-2),2})\cup \operatorname{Span}(\mathrm{AP}_2^{(n-2),2}).$$
\end{lemma}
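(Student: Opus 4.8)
The plan is to prove the sharper, pointwise statement: every individual $A2T$ relation $\mathrm{AP}_3^{(n-3),3}$ is a finite $\QQ$-linear combination of $A6T$ relations $\mathrm{AP}_3^{(n-2),2}$ and of $\mathrm{AP}_2^{(n-2),2}$ relations. First I would fix one such relation $r$ and write down a concrete model for it. By construction it is read off a Reidemeister~III picture: three \emph{active} arrows $a,b,c$ whose six ends occupy six prescribed arcs of the circle, together with $n-3$ \emph{spectator} arrows left untouched by the move; then $r=\varepsilon\,(A_{+}-A_{-})$, where $A_{\pm}\in\mathcal{A}_n$ record all three crossings on the two sides of the triangle move and $\varepsilon=\pm1$ is dictated by the writhes and markings. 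In the solid-torus framework the markings of $a,b,c$ are constrained by the position of the triple point, and $A_{+}$ and $A_{-}$ carry the same markings; if $A_{+}=A_{-}$ as arrow diagrams then $r=0$ and there is nothing to prove, so I may assume the local picture is non-degenerate.

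Next I would realize the passage $A_{+}\rightsquigarrow A_{-}$ by a sequence of elementary moves performed inside the fixed disk around the triple point: one end of one active arrow travels across the others one step at a time. Each time this end sweeps across an entire chord of a \emph{second} active arrow \emph{in the presence of the third}, the difference of the two consecutive arrow diagrams is, up to sign and modulo the spectators, exactly an $A6T$ relation; each time it only has to sweep past a bigon or past spectator ends, the difference is either zero or an $\mathrm{AP}_2^{(n-2),2}$ relation. Summing these elementary differences telescopes to $A_{+}-A_{-}$; equivalently, one checks by inspection that a suitable signed sum of $A6T$ and $\mathrm{AP}_2^{(n-2),2}$ relations has all of its ``interior'' terms cancel in pairs, leaving precisely $\pm r$. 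The conceptual reason this works is the classical fact that a full strand slide — Reidemeister~III with all crossings recorded, i.e. the $8T$ relation — is a consequence of Reidemeister~II together with the partial (``$6T$'') form of Reidemeister~III; transcribed at the arrow level through $S$, the $\mathrm{P}_2^{(n-1),1}$ contributions one would a priori also need are automatically trivial on $\operatorname{Im}(S)$ by Lemma~\ref{ImS}, which is exactly why no fourth family appears.

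Finally I would carry out the bookkeeping: checking that the $A6T$ and $\mathrm{AP}_2^{(n-2),2}$ instances produced above are genuine homogeneous relations of the listed types (Fig.~\ref{pic:Arelations}), and in particular that the homological markings of the active arrows — which change in a controlled way under the strand slide — always match the marking patterns occurring in those relations, and that the signs $\operatorname{sign}(\sigma)$ introduced by $S$ are consistent throughout the telescope.

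I expect this last step to be the main obstacle: there is no conceptual difficulty, but tracking the markings and the $S$-signs simultaneously across the isotopy is delicate, and it is precisely here that orientability of the surface (automatic for the annulus) is used, and that one is forced to keep \emph{both} $\mathrm{AP}_3^{(n-2),2}$ and $\mathrm{AP}_2^{(n-2),2}$ available on the right-hand side rather than a single family.
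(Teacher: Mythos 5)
Your target is the right one---the paper, too, proves the lemma by exhibiting each $A2T$ relation as an explicit linear combination of $A6T$ relations modulo $\mathrm{AP}_2^{(n-2),2}$---but your proposal stops exactly where the proof begins. The entire mathematical content of the lemma is that explicit combination, and you never write it down or verify it: you describe a telescoping strategy and then concede that tracking the markings and the $S$-signs ``is the main obstacle.'' That obstacle \emph{is} the proof; what you have is a plan, not an argument.

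Moreover, the mechanism you describe would not produce the identity that actually holds. A telescope whose interior terms ``cancel in pairs'' yields a combination of relations with coefficients $\pm 1$. The paper's proof (Figs.~\ref{pic:eight6T} and~\ref{pic:eight6Tter}) uses \emph{eight} $A6T$ relations combined as $(1)+(2)+\frac{1}{2}[(3)+(4)-(5)-(6)-(7)-(8)]$; after clearing denominators the first two enter with coefficient $2$, so the cancellation pattern is not pairwise, and the identity holds only up to $\mathrm{AP}_2^{(n-2),2}$ corrections rather than on the nose. Note also that a single step of your sweep is a difference of two diagrams, which can never be ``exactly an $A6T$ relation'' (a six-term sum); each step must be completed to a full $A6T$ relation and the four leftover terms shown to cancel against those of other steps---precisely the bookkeeping you defer. (Your side claim that orientability is used ``precisely here'' is also unsupported; nothing in the paper's verification of this lemma invokes it.) In short: correct identification of what must be shown, but the combination itself---the substance of the lemma---is missing, and the heuristic offered for why it should exist does not match the combination that works.
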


\begin{lemma}\label{SpanOrth}
Let $A\in \mathcal{A}$ and let $\mathrm{X}$ be a name among $\mathrm{P}_1$, $\mathrm{P}_2^{(n-2),2}$, $\mathrm{P}_3^{(n-2),2}$, $\mathrm{P}_3^{(n-3),3}$. Then

$$A\in \operatorname{Span}^\perp(\mathrm{AX})\Longleftrightarrow S(A)\in \operatorname{Span}^\perp(\mathrm{X}).$$
\end{lemma}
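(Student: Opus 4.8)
The plan is to prove both implications by relating the pairing of $S(A)$ against a Gauss-diagram relation $\mathrm{X}$ to the pairing of $A$ against the corresponding arrow-diagram relation $\mathrm{AX}$, using the sign-refined brackets $(\!(\cdot,\cdot)\!)$ and $\left\langle\!\left\langle\cdot,\cdot\right\rangle\!\right\rangle$ introduced in Section~\ref{sec:arrows}. The key computational input is Lemma~\ref{lem:brackets}, which gives $\left\langle\!\left\langle A,G\right\rangle\!\right\rangle=\left\langle S(A),I(G)\right\rangle$ for all $A\in\mathcal{A}$, $G\in\mathcal{G}$; combined with $\left\langle A,A'\right\rangle=|\operatorname{Aut}(A)|\cdot(A,A')$ this lets me pass freely between $S(A)$-conditions on the Gauss side and $A$-conditions on the arrow side.

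First I would fix the name $\mathrm{X}$ and write $\mathrm{X}=\sum_j \varepsilon_j G_j$ for the defining combination of Gauss diagrams (with $G_j$ ranging over the diagrams appearing in that homogeneous Polyak relation, for all choices of the ``unseen'' arrows). Since $\mathrm{X}$ lies in the image of $I$ only up to the ambient relations, the cleaner route is to observe that each homogeneous Polyak relation $\mathrm{X}$ is itself a linear combination of diagrams of the form $I(G)$ — indeed the relations in Fig.~\ref{pic:Pmoves} and their homogeneous pieces in Fig.~\ref{pic:homogeneous} are built so that $\operatorname{Span}(\mathrm{X})$ corresponds under $I$ to $\operatorname{Span}$ of the arrow-diagram relations, as in the proof of Theorem~\ref{thm:PolyakAlg}. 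So I would instead argue: $S(A)\in\operatorname{Span}^\perp(\mathrm{X})$ means $\left\langle S(A),\mathrm{x}\right\rangle=0$ for every generator $\mathrm{x}$ of $\operatorname{Span}(\mathrm{X})$; writing $\mathrm{x}=\sum_j\varepsilon_j I(H_j)$ (which is possible because these relations are, essentially by construction, supported on principal parts that are images under $I$, or else one works directly with $I$ applied to the $\mathrm{R}$-relations via Theorem~\ref{thm:PolyakAlg}), we get $\left\langle S(A),\mathrm{x}\right\rangle=\sum_j\varepsilon_j\left\langle S(A),I(H_j)\right\rangle=\sum_j\varepsilon_j\left\langle\!\left\langle A,H_j\right\rangle\!\right\rangle$ by Lemma~\ref{lem:brackets}. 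The right-hand side is exactly $\left\langle\!\left\langle A,\cdot\right\rangle\!\right\rangle$ paired against the \emph{arrow} relation obtained by replacing each signed Gauss diagram $H_j$ by its underlying arrow diagram — and careful bookkeeping of the signs $\varepsilon_j$ under the passage $G\mapsto S(\text{underlying arrow diagram})$ is precisely what turns $\mathrm{X}$ into $\mathrm{AX}$ (this is where the sign list of Fig.~\ref{pic:Arelations} is forced, and why $\mathrm{AP}_2^{(n-1),1}$ collapses to $0=0$, as already noted). Thus $\left\langle S(A),\mathrm{x}\right\rangle=\left\langle\!\left\langle A,\mathrm{ax}\right\rangle\!\right\rangle$ for the matching generator $\mathrm{ax}$ of $\operatorname{Span}(\mathrm{AX})$, and since $\left\langle\!\left\langle A,\mathrm{ax}\right\rangle\!\right\rangle=0$ for all $\mathrm{ax}$ is equivalent to $A\in\operatorname{Span}^\perp(\mathrm{AX})$ (the factors $|\operatorname{Aut}(A)|$ being nonzero), the two conditions are equivalent.

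To make this rigorous I would organize it as: (i) a lemma that each generator of $\operatorname{Span}(\mathrm{X})$ can be written as $I$ applied to an $\mathrm{R}$-type relation, hence as a signed sum $\sum\varepsilon_j I(H_j)$ — citing the already-granted Theorem~\ref{thm:PolyakAlg}; (ii) the sign-tracking identity showing that the arrow diagrams underlying the $H_j$, taken with signs $\varepsilon_j$, assemble into a generator of $\operatorname{Span}(\mathrm{AX})$ and that $\sum_j\varepsilon_j S(\text{underlying})$ recovers the original signed Gauss combination modulo the kernel of the pairing — this is the only place real case-checking happens, and it reduces to the four named families $\mathrm{P}_1,\mathrm{P}_2^{(n-2),2},\mathrm{P}_3^{(n-2),2},\mathrm{P}_3^{(n-3),3}$; (iii) the one-line application of Lemma~\ref{lem:brackets} and the nonvanishing of $|\operatorname{Aut}(A)|$ to conclude. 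I would handle the four families in (ii) one figure-panel at a time, noting that $\mathrm{P}_1$ and the two degree-drop-$2$ relations each involve only two or few Gauss diagrams differing by one sign flip, so $S$ of the underlying arrow diagram lands on exactly the right signed pair; the $8T$/$G6T$ and $G2T$ cases involve more terms but the same mechanism, and one may further reduce $G2T$ to the others via Lemma~\ref{crucial2T6T} if desired, though it is cleaner to verify it directly.

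The main obstacle I anticipate is (ii): getting the signs exactly right under the interplay between $\operatorname{sign}(\sigma)$ in the definition~(\ref{eq3}) of $S$, the signs $\varepsilon_j$ in the Polyak relations, and the normalization factor $|\operatorname{Aut}(\cdot)|$ hidden in $\left\langle\!\left\langle\cdot,\cdot\right\rangle\!\right\rangle$ versus $\left\langle\cdot,\cdot\right\rangle$. In particular one must be careful that an arrow diagram $A$ appearing in $\mathrm{AX}$ may have symmetries that some of the Gauss diagrams $A^\sigma$ lack, so the identification of generators on the two sides is not term-by-term but only after summing — the symmetric bracket $\left\langle\!\left\langle\cdot,\cdot\right\rangle\!\right\rangle$ is designed precisely to absorb this, but one should state explicitly which normalization is used on each side and check that Lemma~\ref{lem:brackets} applies without the no-symmetry hypothesis (it does, for the $\left\langle\!\left\langle\cdot,\cdot\right\rangle\!\right\rangle$ version). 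Once the sign/normalization bookkeeping is pinned down for the four families, the equivalence is immediate.
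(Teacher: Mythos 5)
The paper never writes down a proof of Lemma~\ref{SpanOrth} --- it is stated and then invoked in the proofs of Theorems~\ref{thm:ArrowHomogenous} and~\ref{thm:main} as a routine verification --- so your proposal has to stand on its own. The mechanism you pick is the right one: convert $\left\langle S(A),\cdot\right\rangle$ against a homogeneous Polyak relation into a pairing of $A$ against sign-weighted underlying arrow diagrams, via $\left\langle\!\left\langle \,,\right\rangle\!\right\rangle$ and Lemma~\ref{lem:brackets}, and reduce the lemma to matching generators on the two sides. But there are two soft spots. First, your step (i) is false as stated: a homogeneous relation such as $\mathrm{P}_3^{(n-2),2}=\pi_n(\mathrm{P}_3)$ is \emph{not} of the form $\sum_j\varepsilon_j I(H_j)$ with the $H_j$ the diagrams visible in Fig.~\ref{pic:homogeneous}; it is only the degree-$n$ part of such a combination, and $I(H)\neq H$. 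The repair is cheap but must be said: reduce to $A$ homogeneous of degree $n$ (harmless, since the relations are homogeneous and the pairing respects degree), and note that for $G\in\mathcal{G}_n$ one has $\left\langle S(A),G\right\rangle=\left\langle S(A),I(G)\right\rangle$ because $I(G)-G$ lies in $\mathcal{G}_{\leq n-1}$, which is orthogonal to $S(A)\in\mathcal{G}_n$. Only then does Lemma~\ref{lem:brackets} give the clean identity
$$\left\langle S(A),G\right\rangle=\left\langle\!\left\langle A,G\right\rangle\!\right\rangle=\abs{\operatorname{Aut}(A)}\cdot\operatorname{sign}(G)\cdot\bigl(A,\underline{G}\bigr),$$
where $\underline{G}$ denotes the arrow diagram underlying $G$ and $\operatorname{sign}(G)$ the product of its writhes; in other words the adjoint of $S$ for $\left\langle\,,\right\rangle$ is $G\mapsto\operatorname{sign}(G)\cdot\underline{G}$, and no detour through $I$ is needed at all.

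Second, once that identity is in place, the \emph{entire} content of the lemma is your step (ii), which you defer: one must check, family by family and for each choice of signs on the unseen arrows, that $\sum_j\varepsilon_j\operatorname{sign}(G_j)\,\underline{G_j}$ is a nonzero multiple of a generator of $\operatorname{Span}(\mathrm{AX})$ (this inclusion gives the implication from $A\in\operatorname{Span}^\perp(\mathrm{AX})$ to $S(A)\in\operatorname{Span}^\perp(\mathrm{X})$, and it is exactly what forces the sign list of Fig.~\ref{pic:Arelations} and makes $\mathrm{AP}_2^{(n-1),1}$ collapse to $0=0$), \emph{and} that every generator of $\operatorname{Span}(\mathrm{AX})$ arises this way (needed for the converse implication; without surjectivity onto the generators you only get one inclusion of orthogonal complements). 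You correctly name this check and the symmetry subtlety that the normalized bracket absorbs, but you do not carry it out, so as written the proposal is a scheme for a proof rather than a proof; the four-family sign verification is where the lemma actually lives.
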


\begin{figure}[h!]
\centering 
\psfig{file=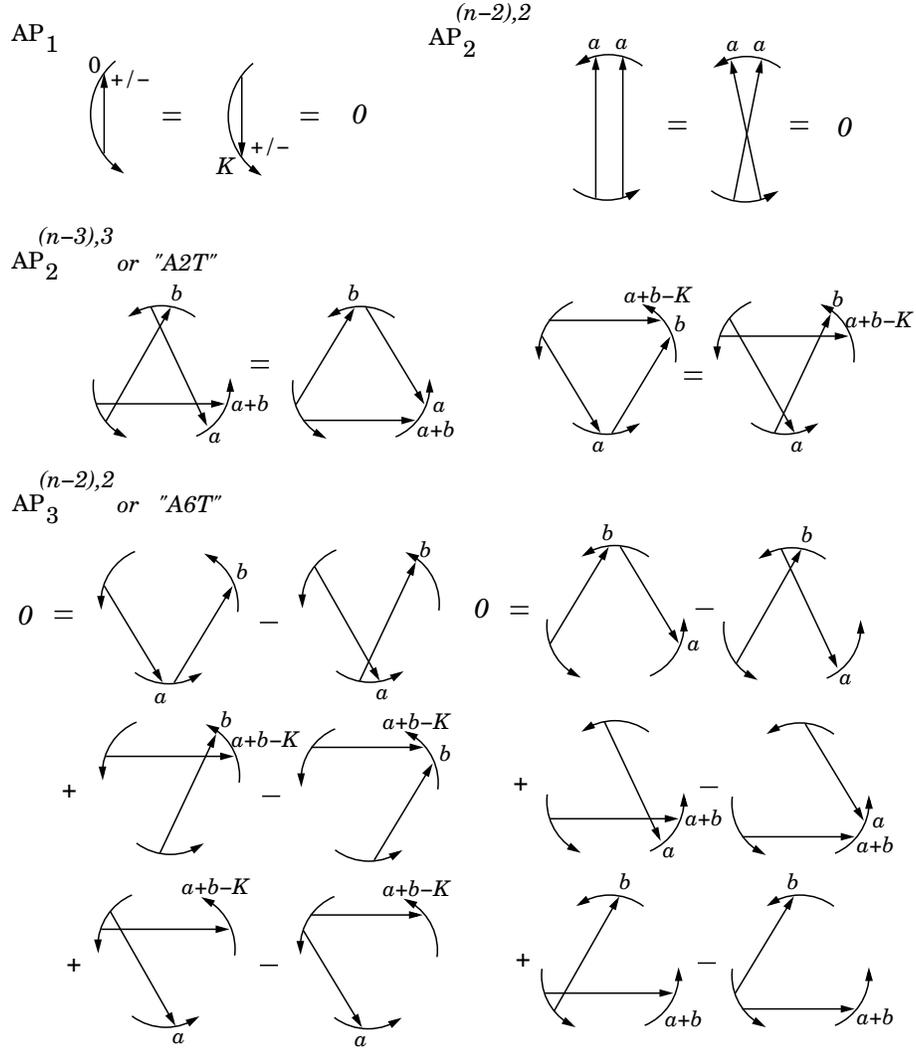,scale=0.60}
\caption{The homogeneous arrow relations}\label{pic:Arelations}
\end{figure}

\subsubsection*{Defining the map $d$}

Applying Theorem~\ref{thm:PolyakAlg}, Lemma~\ref{ImS} and Lemma~\ref{SpanOrth}, gives immediately:

\begin{lemma}\label{thm:R1et2}
Let $A\in \mathcal{A}$. Then the function $ \left\langle S(A),I(\cdot) \right\rangle $ defines an invariant under Reidemeister I and II moves for virtual knots with homology class $K$ if and only if $A$ satisfies all the relations $\left\langle A,\mathrm{AP}_1\right\rangle =0$ and $\left\langle A,\mathrm{AP}_2^{(n-2),2}\right\rangle =0$.
\end{lemma}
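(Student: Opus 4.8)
The plan is to unwind the definitions so that the condition "$\langle S(A), I(\cdot)\rangle$ is invariant under Reidemeister I and II" becomes, via Theorem~\ref{thm:PolyakAlg}, the condition that $S(A)$ is orthogonal to the spans of the corresponding Polyak relations, and then to translate that last condition into a statement purely about $A$ using Lemma~\ref{SpanOrth}. First I would recall that a Gauss sum $G$ defines a function on knot diagrams that is invariant under a given Reidemeister move if and only if $\langle G, I(\cdot)\rangle$ vanishes on the span of the associated R-relations for Gauss diagrams; by Theorem~\ref{thm:PolyakAlg}, $I$ carries $\operatorname{Span}(\mathrm{R}_i)$ isomorphically onto $\operatorname{Span}(\mathrm{P}_i)$, so this is equivalent to $G \in \operatorname{Span}^\perp(\mathrm{P}_i)$ for $i=1,2$ (since R1 and R2 are the moves at stake). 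Applying this with $G = S(A)$, invariance under R1 and R2 is equivalent to $S(A) \in \operatorname{Span}^\perp(\mathrm{P}_1) \cap \operatorname{Span}^\perp(\mathrm{P}_2)$.

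Next I would decompose the $\mathrm{P}_2$ relations into their homogeneous pieces. As noted in the "Homogeneous Polyak relations" paragraph, $G$ satisfies the $\mathrm{P}_2$ relations iff it satisfies the homogeneous relations $\langle G, \pi_k(\mathrm{P}_2)\rangle = 0$ for all $k$, i.e. iff $G$ is orthogonal to both $\operatorname{Span}(\mathrm{P}_2^{(n-1),1})$ and $\operatorname{Span}(\mathrm{P}_2^{(n-2),2})$. But by Lemma~\ref{ImS}, being orthogonal to all the $\mathrm{P}_2^{(n-1),1}$ is exactly the condition that $G$ lies in the image of $S$ — and here $G = S(A)$ already does, automatically. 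Hence for $G = S(A)$ the $\mathrm{P}_2$ condition collapses to just $S(A) \in \operatorname{Span}^\perp(\mathrm{P}_2^{(n-2),2})$. So altogether invariance under R1 and R2 is equivalent to the two conditions $S(A) \in \operatorname{Span}^\perp(\mathrm{P}_1)$ and $S(A) \in \operatorname{Span}^\perp(\mathrm{P}_2^{(n-2),2})$.

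Finally I would invoke Lemma~\ref{SpanOrth} twice, with $\mathrm{X} = \mathrm{P}_1$ and with $\mathrm{X} = \mathrm{P}_2^{(n-2),2}$: this gives $S(A) \in \operatorname{Span}^\perp(\mathrm{P}_1) \Leftrightarrow A \in \operatorname{Span}^\perp(\mathrm{AP}_1)$ and $S(A) \in \operatorname{Span}^\perp(\mathrm{P}_2^{(n-2),2}) \Leftrightarrow A \in \operatorname{Span}^\perp(\mathrm{AP}_2^{(n-2),2})$, i.e. $\langle A, \mathrm{AP}_1\rangle = 0$ and $\langle A, \mathrm{AP}_2^{(n-2),2}\rangle = 0$ for all the relations in those families. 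Chaining the equivalences yields the statement.

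The argument is essentially bookkeeping once the four cited results are in hand, so there is no real obstacle in the proof itself; the only point requiring a little care is making sure the reduction of the $\mathrm{P}_2$ condition is done in the right order — one must use that $S(A)$ is in the image of $S$ (so the $\mathrm{P}_2^{(n-1),1}$ part is vacuous) \emph{before} applying Lemma~\ref{SpanOrth}, which is why $\mathrm{AP}_2^{(n-1),1}$ does not appear in the final list. This is exactly the "$\mathrm{AP}_2^{(n-1),1}$ is useless" remark made just before the statement, and it is worth a sentence in the write-up to flag it.
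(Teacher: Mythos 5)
Your proposal is correct and follows exactly the route the paper intends: the paper gives no written proof beyond stating that the lemma follows immediately from Theorem~\ref{thm:PolyakAlg}, Lemma~\ref{ImS} and Lemma~\ref{SpanOrth}, and your write-up is precisely the unwinding of those three citations, including the observation that the $\mathrm{P}_2^{(n-1),1}$ part is automatic for elements of the image of $S$. Nothing is missing.
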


This condition is easy to check with our naked eye, so we will be happy with a map $d$ which can only detect invariance under Reidemeister III.
\newline

Let $A$ be an arrow diagram. We denote by $\bullet(A)\in \mathcal{A}_\bullet$ the sum of all based diagrams that one can form by choosing a base arc in $A$. The map $d$ is first going to be defined on based diagrams.

\begin{definition}
We say that a based diagram $B_\bullet$ is \textit{nice} if the endpoints of its base arc belong to two different arrows.
\end{definition}
If $B_\bullet$ is not nice, then we set $d(B_\bullet)=0$.

If $B_\bullet$ is nice, then $d(B_\bullet)$ is the degenerate diagram obtained from $B_\bullet$ by shrinking the base to a point, multiplied by a sign $\epsilon (B_\bullet)$ defined as follows. Put
$$\eta (B_\bullet)=\left\lbrace \begin{array}{l}
+1 \text{ if the arrows that bound the base arc cross each other} \\
-1 \text{ otherwise}
\end{array}\right. ,$$
and let $\uparrow\!(B_\bullet)$ be the number of arrowheads at the boundary of the base arc.
Then
$$\epsilon (B_\bullet)= \eta(B_\bullet)\cdot(-1)^{\uparrow(B_\bullet)}.$$
The map $d$ is extended linearly to $\mathcal{A_\bullet}$.

Finally, define:
\begin{eqnarray}
d(A)=d(\bullet(A))\in \mathcal{DA}_{\text{\!\reflectbox{$\setminus$}}\nabla}.
\end{eqnarray}

An example is shown on Fig.\ref{PCI2}.

\begin{figure}[h!]
\centering 
\psfig{file=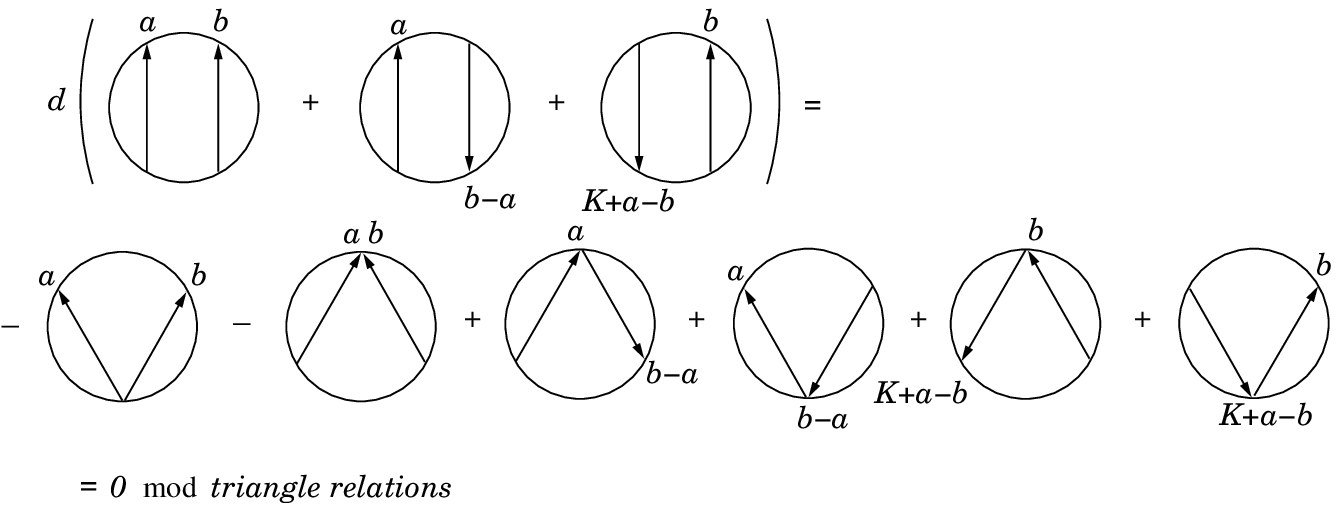,scale=0.9}
\caption{Boundary of the universal planar chain invariant for $n=2$}\label{PCI2}
\end{figure}

\begin{figure}[h!]
\centering 
\psfig{file=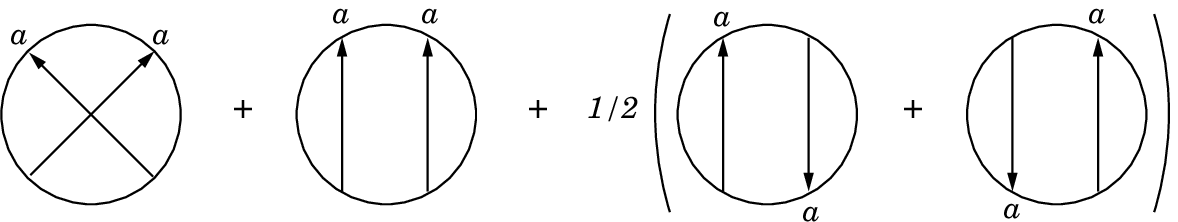
,scale=0.9}
\caption{An arrow polynomial in the kernel of $d$ but not $\mathrm{R}_2$-invariant}\label{pic:counterexample}
\end{figure}

\begin{theorem}[Main Theorem]\label{thm:main}
Let $A\in \mathcal{A}$ satisfy the conditions of Lemma~\ref{thm:R1et2}. Then the following are equivalent:
\begin{enumerate}
\item $A$ is an arrow diagram formula for invariants of virtual knots.
\item $d(A)=0$ modulo the triangle relations.
\item $A\in \operatorname{Span}^\perp (A6T)$.
\end{enumerate}
\end{theorem}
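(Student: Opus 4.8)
We prove $(1)\Leftrightarrow(3)$ by unwinding the definition of a virtual invariant through the machinery of Section~2, and $(2)\Leftrightarrow(3)$ by a direct study of the map $d$; the latter equivalence involves no Gauss diagrams and is really the assertion that $\ker d=\operatorname{Span}^\perp(A6T)$.

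For $(1)\Leftrightarrow(3)$: since virtual Reidemeister moves leave the Gauss diagram unchanged, Theorem~\ref{GaussComplete} shows that $\mathtt{G}\mapsto\left\langle S(A),I(\mathtt{G})\right\rangle$ is a virtual knot invariant precisely when it vanishes on $\operatorname{Span}(\mathrm{R}_i)$ for $i=1,2,3$. By Theorem~\ref{thm:PolyakAlg}, $I\bigl(\operatorname{Span}(\mathrm{R}_i)\bigr)=\operatorname{Span}(\mathrm{P}_i)$, so this says
\[
S(A)\in\operatorname{Span}^\perp(\mathrm{P}_1)\cap\operatorname{Span}^\perp(\mathrm{P}_2)\cap\operatorname{Span}^\perp(\mathrm{P}_3).
\]
Replace each $\mathrm{P}_i$ relation by its homogeneous parts (the full list being $\mathrm{P}_1$, $\mathrm{P}_2^{(n-1),1}$, $\mathrm{P}_2^{(n-2),2}$, $\mathrm{P}_3^{(n-2),2}$, $\mathrm{P}_3^{(n-3),3}$): orthogonality of $S(A)$ to the relations $\mathrm{P}_2^{(n-1),1}$ is automatic by Lemma~\ref{ImS} (as $S(A)$ lies in the image of $S$), and Lemma~\ref{SpanOrth} turns orthogonality of $S(A)$ to $\mathrm{P}_1$, $\mathrm{P}_2^{(n-2),2}$, $\mathrm{P}_3^{(n-2),2}$, $\mathrm{P}_3^{(n-3),3}$ into orthogonality of $A$ to $\mathrm{AP}_1$, $\mathrm{AP}_2^{(n-2),2}$, $A6T$, $A2T$ respectively. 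By hypothesis $A$ satisfies the first two of these (this is Lemma~\ref{thm:R1et2}), and by Lemma~\ref{crucial2T6T} the $A2T$ relations lie in the span of the $A6T$ and $\mathrm{AP}_2^{(n-2),2}$ relations, so $A\perp A2T$ follows. Hence, under the standing hypothesis, $(1)$ reduces to $A\in\operatorname{Span}^\perp(A6T)$, which is $(3)$.

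For $(2)\Leftrightarrow(3)$: here one must show that the degree-preserving map $d$ pairs dually with the $A6T$ relations. I would first analyse the target: the triangle relations of Fig.~\ref{pic:triangle} identify degenerate arrow diagrams that differ by pushing the degeneracy past a crossing, or by exchanging the two strands meeting at it, and one deduces a normal form in which the degeneracy sits in a prescribed position, giving an explicit spanning set of $\mathcal{DA}_{/\nabla}$. Then, for each $A6T$ relation $r$ -- which is indexed by a local two-arrow configuration -- I would build the linear functional $\lambda_r\colon\mathcal{DA}\to\QQ$ reading off the coefficient of the corresponding normal-form diagram, verify that it kills $\operatorname{Span}(\nabla)$ (so it descends to $\mathcal{DA}_{/\nabla}$), and establish the identity $\lambda_r\bigl(d(A)\bigr)=\pm\left\langle A,r\right\rangle$; this uses the $1$-$1$ correspondence between based and degenerate diagrams together with an inspection of how the sign $\epsilon(B_\bullet)=\eta(B_\bullet)\cdot(-1)^{\uparrow(B_\bullet)}$ behaves as the base arc is slid. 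Finally I would check that the $\lambda_r$ separate the points of $\mathcal{DA}_{/\nabla}$, i.e. $\bigcap_r\ker\lambda_r=\operatorname{Span}(\nabla)$. With these facts,
\begin{align*}
d(A)=0\text{ in }\mathcal{DA}_{/\nabla}
&\iff \lambda_r\bigl(d(A)\bigr)=0 \text{ for all }r\\
&\iff \left\langle A,r\right\rangle=0 \text{ for all }r\\
&\iff A\in\operatorname{Span}^\perp(A6T),
\end{align*}
which is $(2)\Leftrightarrow(3)$.

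The main obstacle is the construction and verification of the functionals $\lambda_r$. Showing $\lambda_r\bigl(\operatorname{Span}(\nabla)\bigr)=0$ forces the signs $\eta$ and $(-1)^{\uparrow}$ in the definition of $d$ to be exactly the ones compatible with the triangle relations -- this is the delicate computation pinning down $d$ -- and showing that the $\lambda_r$ separate points of $\mathcal{DA}_{/\nabla}$ amounts to proving that the triangle relations impose no further coincidences, i.e. that the normal-form representatives are linearly independent in the quotient. Orientability of the ambient surface -- automatic for the annulus, but needed for the generalization announced in the introduction -- enters at this stage, to keep the signs coherent. Everything else, namely the degreewise reduction and the bookkeeping of homogeneous parts in $(1)\Leftrightarrow(3)$, is routine given the preceding lemmas.
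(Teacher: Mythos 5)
Your proposal follows essentially the same route as the paper: the $(1)\Leftrightarrow(3)$ part is the paper's reduction of $\mathrm{R}_3$-invariance through Theorem~\ref{thm:PolyakAlg} and Lemmas~\ref{ImS}, \ref{SpanOrth} and \ref{crucial2T6T}, and your functionals $\lambda_r$ are exactly the coordinates with respect to the paper's basis of \emph{monotonic} degenerate diagrams (arrowhead meets arrowtail at the degeneracy), with the duality identity $(d(B_\bullet),D)=(B_\bullet,A6T_\bullet(D))$. The only point you leave implicit is where the normalized pairing $\left\langle , \right\rangle$ (rather than $(,)$) enters: the paper isolates this as the identity $(\bullet(A),A^\prime_\bullet)=\left\langle A,A^\prime\right\rangle$, which holds because exactly $\abs{\operatorname{Aut}(A^\prime)}$ summands of $\bullet(A)$ match a fixed based representative of $A^\prime$.
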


This theorem gives a formal proof to the fact that the $ \left\langle , \right\rangle $ pairing enables a uniformization of the formulas that depend on parameters: we shall see for instance that the degenerate cases of Grishanov-Vassiliev's invariants need not be treated separately. See also Proposition $2$ from \citep{Fiedlerbraids}, where points $(i)$ and $(ii)$ are actually the same formula, or the special case $2a=K$ in Theorem~\ref{thm:length5} below. Note that the requirement of $\mathrm{R}_2$ invariance is necessary with the present settings -- see Fig.\ref{pic:counterexample}.

\begin{remark}
It is possible to give a simplicial interpretation of the above map $d$, that makes it the first step towards a cohomology theory \enquote{\`{a} la Vassiliev} \citep{Vassiliev1990,Vassiliev}. It will be done in a forthcoming paper.
\end{remark}

%\subsubsection{A word about thickened surfaces}

%It is possible to extend the results of this paper replacing the solid torus with $M^2\times\RR$ where $M^2$ is a compact orientable surface. One decorates the arrows with elements of $h_1(M^2)$ (the set of free homotopy classes of loops in $M$), as it is done in \citep{Grishanov}, instead of $\ZZ$. The only tricky part is defining the $\mathrm{R}$-moves for Gauss diagrams in that framework, because of the fact that $h_1(M^2)$ has no natural group structure. To get round this difficulty, one method is to use a generalization of the $T$-diagrams defined in \citep{MortierGaussDiagrams}.

\subsection{Application to Grishanov-Vassiliev's planar chain invariants}

In \citep{Grishanov}, Grishanov-Vassiliev define an infinite family of arrow diagram formulas for classical knots in $M^2\times \RR$. Let us recall their construction.
\begin{definition}\label{def:nakedArrow}
A \textit{naked arrow diagram} is an arrow diagram with every decoration forgotten -- as usual, up to oriented homeomorphism of the circle.

A naked arrow diagram is called \textit{planar} if no two of its arrows intersect.

A \textit{chain presentation} of such a diagram with $n$ arrows is a way to number its $n+1$ internal regions from $1$ to $n+1$, in such a way that the numbering increases when one goes from the left to the right of an arrow.

Let $U_n$ be the sum of all planar isotopy equivalence classes of chain presentations of naked arrow diagrams with $n$ arrows. $U_n$ is called the \textit{universal degree $n$ planar chain} (\citep{Grishanov}, Definition 1).
\end{definition}

For $i=1,\ldots,n+1$, let $\gamma_i\in \ZZ\setminus\left\lbrace 0\right\rbrace$. Given a chain presentation of a naked planar arrow diagram, and given such an ordered collection $\Gamma$, we construct an arrow diagram by decorating each arrow with the sum of the $\gamma_i$'s whose index $i$ is located to the left of that arrow. The global decoration of the circle is set to be the sum of all $\gamma_i$'s.

The element of $\mathcal{A}_n$ constructed that way from $U_n$ and $\Gamma$ is denoted by $\Phi_\Gamma$ (\citep{Grishanov}, Definition 2). Note that some of the summands in $U_n$ may lead to the same element of $\mathcal{A}_n$ if some of the $\gamma_i$'s are equal; unlike Vassiliev-Grishanov, we do not forbid that.

The arrow polynomial at the top of Fig.\ref{PCI2} is the generic example for $n=2$, with $\Gamma=(a,b-a,K-b)$.

\begin{theorem}\label{thm:Grishanov}
For any ordered collection of non-zero homology classes $\Gamma=(\gamma_1,\ldots,\gamma_{n+1})$, the sum $\Phi_\Gamma$ defined above enjoys the hypotheses of Theorem~\ref{thm:main} and thus defines an arrow diagram formula for virtual knots.
\end{theorem}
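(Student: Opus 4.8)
The plan is to verify the three hypotheses packaged in Theorem~\ref{thm:main} for $\Phi_\Gamma$, namely that it satisfies the conditions of Lemma~\ref{thm:R1et2} (the $\mathrm{AP}_1$ and $\mathrm{AP}_2^{(n-2),2}$ relations) and that it lies in $\operatorname{Span}^\perp(A6T)$. By Theorem~\ref{thm:main}, the last of these is equivalent to $d(\Phi_\Gamma)=0$ modulo triangle relations, so in practice I expect to check whichever of condition (2) or (3) is computationally cleaner — probably (3), since the $A6T$ relations are local and $\Phi_\Gamma$ is built from planar (hence highly constrained) arrow diagrams. Since $\Phi_\Gamma$ is already known to be homogeneous of degree $n$, Theorem~\ref{thm:ArrowHomogenous} is not needed here; what matters is the explicit combinatorics of planar chains and their decorations.

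First I would dispatch $\mathrm{AP}_1$: this relation involves an isolated arrow forming a small kink, and in a planar arrow diagram such a configuration is immediately seen to pair to zero, or the relevant summands of $U_n$ cancel in pairs; the homology decorations play no role beyond being consistent. Next, $\mathrm{AP}_2^{(n-2),2}$ concerns two parallel arrows bounding an empty bigon. Here the key observation is that in a planar chain presentation, two arrows cobounding an empty region carry decorations that are forced by the chain structure: reversing which of the two is ``inner'' corresponds to swapping two adjacent region-labels, and the signed count in the $\mathrm{AP}_2$ relation forces a cancellation between the two ways of filling in the bigon. I would phrase this as an involution on the relevant set of (sub)configurations of $U_n$ that reverses sign, so the total contribution vanishes. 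The decorations must be checked to be preserved by this involution, which is where the hypothesis $\gamma_i\neq 0$ is used — it guarantees the two arrows of the bigon are genuinely distinct arrows rather than degenerating.

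The heart of the argument — and the step I expect to be the main obstacle — is showing $\Phi_\Gamma\in\operatorname{Span}^\perp(A6T)$, equivalently $d(\Phi_\Gamma)=0$ modulo $\nabla$. The $A6T$ relation is a six-term local relation among arrow diagrams differing near a triple point of three arrows. The strategy is to partition the summands of $U_n$ that can receive three specified arrows in the relevant local patterns, and to match them up so that the six signed terms of each $A6T$ instance cancel. The subtlety is twofold: (a) planarity of the diagrams in $U_n$ severely restricts which of the six local pictures actually occur as sub-patterns of a planar diagram, so many terms in a given $A6T$ instance are simply absent, and one must check that the \emph{surviving} terms still cancel — this is presumably exactly the point where Grishanov--Vassiliev needed a case analysis and where the $\langle\,,\rangle$-normalization (and the freedom to allow repeated $\gamma_i$) streamlines things; and (b) the homology markings must match across the terms being cancelled, which follows because the three arrows at a triple point separate the same internal regions in the paired configurations, so the ``sum of $\gamma_i$ to the left'' is the same. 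I would organize this as: enumerate the local sub-patterns of planar arrow diagrams at three mutually-positioned arrows; for each, list which $A6T$ terms are realized; exhibit the sign-reversing pairing; and confirm decoration-compatibility using Definition~\ref{def:nakedArrow}. Finally, having all three hypotheses, Theorem~\ref{thm:main} gives the conclusion that $\Phi_\Gamma$ is an arrow diagram formula for virtual knot invariants, and I would close by noting that Lemma~\ref{crucial2T6T} shows the $A2T$ relations (handled by Grishanov--Vassiliev separately in the classical case) are automatically subsumed, which is the promised ``slight improvement.''
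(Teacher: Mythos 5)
Your overall strategy (check the Lemma~\ref{thm:R1et2} conditions, then verify condition (3) of Theorem~\ref{thm:main} directly against the $A6T$ relations, exploiting planarity) is the same as the paper's, but the heart of the argument is left as a programme rather than carried out, and that is where the actual content lies. You say you would ``enumerate the local sub-patterns \dots and exhibit the sign-reversing pairing,'' flagging this as the main obstacle without resolving it. The paper's proof rests on two concrete combinatorial facts that your sketch does not supply: first, in any $A6T$ relation only three of the six diagrams can have pairwise non-intersecting arrows, and either all three of them do or none does, so the relation reduces (for planar diagrams) to a three-term relation $A_1-A_2-A_3$; second, writing $i,j$ for the two region labels involved, $A_2$ is consistent (i.e.\ occurs in $\Phi_\Gamma$) exactly when $A_1$ is consistent and $i<j$, while $A_3$ is consistent exactly when $A_1$ is consistent and $i>j$, so whenever $A_1$ contributes, precisely one of $A_2,A_3$ cancels it, and when $A_1$ does not contribute nothing does. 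Without this reduction and the $i<j$ versus $i>j$ dichotomy (or an equivalent explicit pairing), the claimed cancellation of ``the surviving terms'' is an assertion, not a proof; note also that the cancellation is not a sign-reversing involution on summands of $U_n$ in any obvious sense, but a count matching governed by the order of region labels.

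A secondary inaccuracy: your treatment of $\mathrm{AP}_1$ and $\mathrm{AP}_2^{(n-2),2}$ invokes pairwise cancellations and an involution ``swapping adjacent region-labels,'' with $\gamma_i\neq 0$ used only to keep the two bigon arrows distinct. The actual mechanism is simpler and different: because every $\gamma_i$ is non-zero, the configurations appearing in $\mathrm{AP}_1$ and $\mathrm{AP}_2^{(n-2),2}$ (an isolated kink arrow, respectively an $\mathrm{R}_2$-type pair of arrows with equal markings cobounding an empty arc) cannot occur at all with consistent chain markings in $\Phi_\Gamma$ --- the adjacency of endpoints would force some region decoration, hence some $\gamma_i$, to vanish. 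So the pairings are trivially zero; no cancellation argument is needed, and your proposed involution is not the right use of the hypothesis. Finally, the ``slight improvement'' over Grishanov--Vassiliev is the removal of the unambiguity assumption on $\Gamma$ and the extension to virtual knots, not the subsumption of $A2T$ via Lemma~\ref{crucial2T6T} (that lemma is used in the proof of Theorem~\ref{thm:main} itself).
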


This is an improvement of Theorem $1$ from \citep{Grishanov}, since we remove the assumption that $\Gamma$ is unambiguous (\ie here any of the $\gamma_i$'s may coincide), and we show that $\Phi_\Gamma$ is an invariant for virtual knots. 

\subsection{Some more computations}
In practice, Theorem~\ref{thm:main} gives a very easy means of checking that an arrow polynomial defines a virtual invariant. On another hand, finding virtual invariants when one has no clue of a potential formula demands to solve the system of equations $A6T$.

We wrote a program to do this, and only a few results came, including the generalized Grishanov-Vassiliev's planar chain invariants, and the following:

\begin{theorem}\label{thm:length5}
Let $K\in \ZZ$.
The arrow polynomial of Fig.\ref{pic:length5} defines an invariant of virtual knots with homology class $K$ for any parameter $a\in \ZZ\setminus \left\lbrace 0\right\rbrace$.
\end{theorem}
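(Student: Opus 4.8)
The plan is to deduce everything from Theorem~\ref{thm:main}. Write $A\in\mathcal{A}$ for the arrow polynomial drawn in Fig.~\ref{pic:length5}, with its arrows decorated by the prescribed linear expressions in $a$ and $K$. By Theorem~\ref{thm:main} it suffices to show, first, that $A$ satisfies the hypotheses of Lemma~\ref{thm:R1et2}, and second, that $A$ satisfies \emph{one} of the three equivalent conditions of Theorem~\ref{thm:main}; conclusion (i) then gives exactly the statement. Since $A$ was obtained as an output of our program solving the system $A6T$, the most economical route is to verify condition (iii), namely $A\in\operatorname{Span}^\perp(A6T)$, which is a finite linear computation; I will also sketch the equivalent verification of condition (ii), $d(A)=0$ modulo the triangle relations, as it is the more illuminating one and the one that exhibits the uniformization phenomenon announced after Theorem~\ref{thm:main}.

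\emph{Reidemeister I and II.} The relations $\mathrm{AP}_1$ and $\mathrm{AP}_2^{(n-2),2}$ are local (Fig.~\ref{pic:Arelations}): the former involves a single arrow whose two endpoints are adjacent on the circle, the latter a pair of parallel arrows cobounding an arc, all remaining arrows being spectators. Reading off Fig.~\ref{pic:length5}, one checks by inspection that in $A$ such configurations either never occur or occur in pairs whose contributions cancel once the decorations are compared as well. This is the ``naked eye'' step anticipated by Lemma~\ref{thm:R1et2}; it is also the first place where the hypothesis $a\neq 0$ is used, to keep the arrow markings nonzero and the summands of $A$ pairwise distinct.

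\emph{Reidemeister III.} For condition (iii) one forms, for every $A6T$ relation $r$, the bracket $\left\langle A,r\right\rangle$ and checks it vanishes; the relations $\mathrm{AP}_3^{(n-3),3}$ ($A2T$) then need not be tested, thanks to Lemma~\ref{crucial2T6T} together with the $\mathrm{AP}_2^{(n-2),2}$ condition just established. This is precisely the computation performed by our program. For the equivalent condition (ii) one instead evaluates $d(A)=d(\bullet(A))$: each based diagram obtained by marking a base arc of a summand of $A$ contributes $0$ if it is not nice, and otherwise contributes the shrunk degenerate diagram weighted by $\epsilon=\eta\cdot(-1)^{\uparrow}$. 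I would sort the resulting degenerate diagrams by combinatorial type and by the markings of their arrows (still linear in $a$ and $K$ after degeneration), and show that they cancel in pairs, in some cases only after applying a triangle relation of Fig.~\ref{pic:triangle}. Here again $a\neq 0$ is what rules out the forbidden marking $0$ and the spurious coincidences that would otherwise merge distinct degenerate diagrams.

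The principal difficulty is this last bookkeeping: there are many based diagrams to enumerate, the two sign factors $\eta$ (crossing versus non-crossing pair of arrows at the base) and $(-1)^{\uparrow}$ (parity of arrowheads on the base arc) must be tracked with care, and — most delicately — one must recognise when two degenerate diagrams become equal only modulo a triangle relation, i.e.\ in $\mathcal{DA}_{/\nabla}$ rather than in $\mathcal{DA}$. A reassuring safeguard is that conditions (ii) and (iii) are equivalent by Theorem~\ref{thm:main}, so the combinatorial evaluation of $d(A)$ and the linear check of the brackets $\left\langle A,A6T\right\rangle$ validate one another. I would close by observing that the specialisation $2a=K$ requires no separate treatment — the corresponding degenerate case is absorbed automatically — which is exactly the kind of uniformization that the $\left\langle\,,\,\right\rangle$ pairing is designed to provide.
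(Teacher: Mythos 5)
Your proposal matches the paper's (implicit) treatment: the paper offers no written-out proof of this theorem, presenting the formula as an output of a program that solves the $A6T$ system, so its justification is exactly your route --- verify the $\mathrm{AP}_1$ and $\mathrm{AP}_2^{(n-2),2}$ conditions of Lemma~\ref{thm:R1et2} (which is where $a\neq 0$ enters, as in the proof of Theorem~\ref{thm:Grishanov}), then invoke condition~(iii) of Theorem~\ref{thm:main} via the finite linear check $\left\langle A,A6T\right\rangle=0$. Your additional sketch of the equivalent verification via $d(A)=0$ is consistent but, like the paper, you leave the actual finite computation to the machine.
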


This seems to give a positive answer to T. Fiedler's question about the existence of $N$-invariants not contained in those from \citep{Fiedlerbraids}, Proposition $2$.

On the other hand, the sparse landscape of results leads to think that most arrow diagram invariants might have infinite length -- \ie live in the algebraic completion of $\mathcal{A}$, just like Fiedler's $N$-invariants.

\begin{figure}[h!]
\centering 
\psfig{file=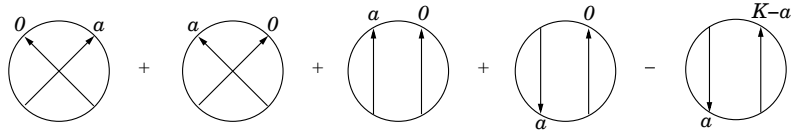,scale=1.1}
\caption{An invariant of length $5$ identically zero for closed braids}\label{pic:length5}
\end{figure}

Notice that in case $a=K$, the formula has only $3$ terms -- but still defines an invariant. We compute it for the family of knots $\mathcal{K}_{2i+1}$ drawn in Fig.\ref{K2i}:
$$I_5(\mathcal{K}_{2i+1})=i(i+1).$$
It proves that there is no general algebraic formula expressing $I_5$ in terms of the only invariants of degree $2$ and finite length previously known (at least to the author), namely Grishanov-Vassiliev's length $3$ invariant (Fig.\ref{PCI2}) -- note that this invariant is already present in \citep{Fiedler} for nullhomologous knots ($K=0$).

\begin{figure}[h!]
\centering 
\psfig{file=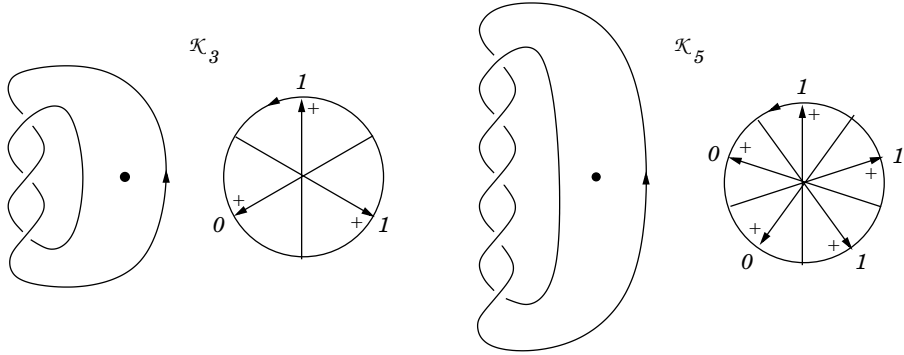,scale=0.9}
\caption{$\mathcal{K}_3$ and $\mathcal{K}_5$}\label{K2i}
\end{figure}

\section{Proofs}

Recall the notations from section~\ref{sec:arrows}.
\begin{lemma}\label{lem:brackets}
For all $A\in \mathcal{A}$ and $G\in \mathcal{G}$, the following equality holds: $$ \left\langle \! \left\langle A,G \right\rangle \! \right\rangle = \left\langle S(A),I(G) \right\rangle .$$
The equality $$(\!(A,G)\!) = (S(A),I(G))$$ holds for all $G$ if and only if $A$ has no symmetries other than the identity.
\end{lemma}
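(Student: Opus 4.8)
The plan is to unwind both pairings entirely in terms of \emph{parametrized} configurations of arrows and compare the two counts directly. Fix $A\in\mathcal{A}_n$ with a chosen numbering of its arrows, and fix $G\in\mathcal{G}_m$. First I would recall that $S(A)=\sum_{\sigma\in\{\pm1\}^n}\operatorname{sign}(\sigma)\,A^\sigma$ and $I(G)=\sum_{H}H$, the sum over all subdiagrams $H$ of $G$ (of all degrees). Since $(\,,\,)$ is orthonormal on Gauss diagrams, $(S(A),I(G))$ is the signed count of pairs $(\sigma,H)$ such that $A^\sigma\cong H$ as a Gauss diagram, weighted by $\operatorname{sign}(\sigma)$. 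The point is to convert this into a count indexed by injections of the $n$ numbered arrows of $A$ into the $m$ arrows of $G$: every isomorphism $A^\sigma\xrightarrow{\sim}H$ determines, by composing with the inclusion $H\hookrightarrow G$, an embedding of the underlying arrow diagram of $A$ into $G$; conversely every such arrow-diagram embedding $f$ picks out a subdiagram $H_f$ of $G$, and the signs of the arrows of $H_f$ determine the unique $\sigma=\sigma_f$ for which $A^{\sigma_f}\cong H_f$ is realized by $f$. Under this correspondence $\operatorname{sign}(\sigma_f)$ is exactly the product of the writhes of the arrows of $H_f$, i.e. the local contribution to $(\!(A,G)\!)$. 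So $(S(A),I(G))=\sum_f \prod(\text{signs of }H_f)$, where $f$ ranges over \emph{numbered} embeddings of $A$'s arrow diagram into $G$. The first obstruction to equality with $(\!(A,G)\!)$ is precisely that $(\!(A,G)\!)$ counts \emph{unnumbered} subdiagrams: each actual subdiagram of $G$ isomorphic (as an unsigned arrow diagram) to $A$ is hit by exactly $|\operatorname{Aut}(A)|$ numbered embeddings $f$, all with the same sign product (an automorphism of $A$ permutes its arrows and leaves the writhe product fixed). Hence $(S(A),I(G))=|\operatorname{Aut}(A)|\cdot(\!(A,G)\!)$ in general, which proves the \enquote{only if} direction and, when $\operatorname{Aut}(A)$ is trivial, gives $(\!(A,G)\!)=(S(A),I(G))$ — the \enquote{if} direction.

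It remains to handle the bracketed version and to double-check there is no second source of overcounting hidden in symmetries of the \emph{Gauss} diagrams $A^\sigma$ or of the subdiagrams $H$. For the second point I would note that the correspondence $f\mapsto(\sigma_f,H_f)$ together with an explicit isomorphism is genuinely bijective onto the set of such triples, and the orthonormal pairing counts triples $(\sigma,H,\text{iso})$ modulo nothing — so symmetries of $A^\sigma$ do not inflate $(S(A),I(G))$; they are already accounted for because a symmetric $A^\sigma$ arises from a correspondingly symmetric $A$, i.e. from nontrivial elements of $\operatorname{Aut}(A)$ that happen to be compatible with that particular $\sigma$. This is the step I expect to be the main obstacle: making the bookkeeping of \enquote{which automorphisms of $A$ survive after choosing $\sigma$} airtight, so that the total multiplicity is $|\operatorname{Aut}(A)|$ on the nose and not something $\sigma$-dependent. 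The cleanest way around it is to observe that $\operatorname{Aut}(A)$ acts freely on the set of numbered embeddings $f$ realizing a fixed unnumbered subdiagram (two numbered embeddings differing by $g\in\operatorname{Aut}(A)$ coincide iff $g=\mathrm{id}$, because the arrows of a subdiagram of $G$ are distinguishable), so orbits have size exactly $|\operatorname{Aut}(A)|$, uniformly.

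For the normalized pairing, recall $\left\langle\,,\,\right\rangle=|\operatorname{Aut}(\,\cdot\,)|\cdot(\,,\,)$ on each argument, but one must be careful which factor appears. By definition $\left\langle\!\left\langle A,G\right\rangle\!\right\rangle=|\operatorname{Aut}(A)|\cdot(\!(A,G)\!)$, and by the computation above $(\!(A,G)\!)=\frac{1}{|\operatorname{Aut}(A)|}(S(A),I(G))$, so $\left\langle\!\left\langle A,G\right\rangle\!\right\rangle=(S(A),I(G))$. On the other hand $\left\langle S(A),I(G)\right\rangle$ — with the normalization applied as in (\ref{eq2}) — must be shown to equal $(S(A),I(G))$; this is where I would invoke the fact that $S(A)$ is an alternating sum over $\{\pm1\}^n$, so every Gauss diagram appearing in it has \emph{trivial} automorphism group as a signed diagram (a nontrivial rotational symmetry of $A^\sigma$ would have to respect signs, but $\sigma$ and $-\sigma$ occur with opposite coefficients, forcing cancellation of exactly the symmetric part — more precisely, $S$ lands in the subspace spanned by asymmetric Gauss diagrams, which is a standard and easy observation about the Polyak map). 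Hence on $\operatorname{Im}(S)$ the two pairings $\left\langle\,,\,\right\rangle$ and $(\,,\,)$ agree, giving $\left\langle S(A),I(G)\right\rangle=(S(A),I(G))=\left\langle\!\left\langle A,G\right\rangle\!\right\rangle$ for \emph{all} $A$ and $G$, with no symmetry hypothesis — exactly the asymmetry between the two stated relations. I would close by remarking that this also re-derives, via $|\operatorname{Aut}(A)|=1$, the first equality in the trivial-automorphism case as a special instance.
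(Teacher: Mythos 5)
There is a genuine gap, and it sits exactly at the subtle point the lemma is designed to address. Your key intermediate claim, $(S(A),I(G))=\abs{\operatorname{Aut}(A)}\cdot(\!(A,G)\!)$ for all $G$, is false. The orthonormal pairing counts \emph{pairs} $(\sigma,H)$ with $H$ a subdiagram of $G$ isomorphic to $A^\sigma$ (as you correctly say at the start), whereas your count over numbered embeddings $f$ counts \emph{triples} $(\sigma,H,\mathrm{iso})$: a fixed pair $(\sigma,H)$ is realized by $\abs{\operatorname{Aut}(A^{\sigma})}$ embeddings, not one. Hence $\sum_f\prod(\text{signs of }H_f)$ equals $\left\langle S(A),I(G)\right\rangle$, not $(S(A),I(G))$, and the correct relation between the two unnormalized brackets involves the $\sigma$-dependent factor $\abs{\operatorname{Aut}(A)}/\abs{\operatorname{Aut}(A^{\sigma})}$ --- this is the content of the paper's orbit--stabilizer computation $S(A)=\sum_{[\sigma]}\frac{\abs{\operatorname{Aut}(A)}}{\abs{\operatorname{Aut}(A^{\sigma})}}A^{\sigma}$. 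Concretely, take $A$ of degree $2$ whose two arrows are exchanged by the half-turn, so $\operatorname{Aut}(A)=\ZZ/2$. Then $S(A)=A^{++}+A^{--}-2A^{+-}$, and for $G=A^{++}$ one gets $(S(A),I(G))=1$ while $\abs{\operatorname{Aut}(A)}\cdot(\!(A,G)\!)=2$; for $G=A^{+-}$ one gets $-2$ versus $-2$. The ratio is not constant in $G$, so your displayed formula fails.

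The second ingredient you invoke to close the argument --- that $S(A)$ lands in the span of Gauss diagrams with trivial automorphism group, so that $\left\langle\,,\right\rangle$ and $(\,,)$ agree on $\operatorname{Im}(S)$ --- is also false: in the same example $A^{++}$ occurs in $S(A)$ with coefficient $+1$ and has automorphism group $\ZZ/2$; no cancellation of \enquote{the symmetric part} occurs (and for $n$ even one has $\operatorname{sign}(\sigma)=\operatorname{sign}(-\sigma)$ anyway, so $\sigma$ and $-\sigma$ do not carry opposite coefficients). The two errors happen to compensate, so your final identity $\left\langle \! \left\langle A,G \right\rangle \! \right\rangle=\left\langle S(A),I(G)\right\rangle$ is the true one, but neither of your two intermediate equalities holds. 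The repair is essentially the paper's proof: let $\operatorname{Aut}(A)$ act on $\left\lbrace\pm1\right\rbrace^n$, use orbit--stabilizer to write $S(A)$ as above, and observe that the factor $\abs{\operatorname{Aut}(A^{\sigma})}$ coming from the normalization of $\left\langle\,,\right\rangle$ cancels the denominator, leaving $\abs{\operatorname{Aut}(A)}(\!(A,G)\!)$. Your \enquote{only if} argument needs the same repair: since $(S(A),I(A^{++}))=(\!(A,A^{++})\!)$ in the example above even though $\operatorname{Aut}(A)$ is nontrivial, it does not suffice to test against an arbitrary $G$ with nonzero bracket; one must choose a sign assignment that breaks the symmetry (e.g.\ $G=A^{+-}$), which is exactly the choice made in the paper.
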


\begin{proof}
Number the $n$ arrows of $A$ and fix a map $\sigma_0:\left\lbrace 1,\ldots,n \right\rbrace\rightarrow\left\lbrace \pm 1\right\rbrace$.
The group of symmetries of $A^{\sigma_0}$, $\operatorname{Aut}(A^{\sigma_0})$, identifies with a subgroup of $\operatorname{Aut}(A)$. Also, the (abelian) group $\operatorname{Aut}(A)$ naturally acts on the set $\left\lbrace \pm 1\right\rbrace^n$. The orbit of $\sigma_0$ is the set of maps $\sigma$ such that $A^\sigma$ is equivalent to $A^{\sigma_0}$ under planar isotopies, and the cardinality of this orbit is the coefficient of $A^\sigma$ in the linear combination $S(A)$. By definition, the stabilizer of $\sigma_0$ is the image of the injective map $\operatorname{Aut}(A^{\sigma_0})\hookrightarrow \operatorname{Aut}(A)$, whence, if we set $\mathcal{O}$ to be the set of orbits:
$$S(A) =\sum_{[\sigma]\in \mathcal{O}} \frac{\abs{\operatorname{Aut}(A)}}{\abs{\operatorname{Aut}(A^\sigma)}}A^\sigma.$$
It follows that for any $G$,
$$
\begin{array}{ccc}
\vspace{2mm} \left\langle S(A),I(G) \right\rangle  & = & \abs{\operatorname{Aut}(A)}\sum_{[\sigma]\in \mathcal{O}}\operatorname{sign}(\sigma)(A^\sigma,I(G)) \\ \vspace{2mm}
& = & \abs{\operatorname{Aut}(A)}(\!(A,G)\!) \\
& = &  \left\langle \! \left\langle A,G \right\rangle \! \right\rangle . \end{array}$$
This proves the first equality as well as the \enquote{if} part of the last statement. For the \enquote{only if} part, set $G$ to be $A^{\sigma_0}$. Then:
$$\begin{array}{ccc}
\vspace{2mm}
(S(A),I(G)) & = & \sum_{[\sigma]\in \mathcal{O}}\operatorname{sign}(\sigma)\frac{\abs{\operatorname{Aut}(A)}}{\abs{\operatorname{Aut}(A^\sigma)}}(A^\sigma,I(G)) \\
& = & \operatorname{sign}(\sigma_0)\frac{\abs{\operatorname{Aut}(A)}}{\abs{\operatorname{Aut}(A^{\sigma_0})}},
\end{array}$$
while
$$(\!(A,G)\!)= \operatorname{sign}(\sigma_0).$$
So one must have $\abs{\operatorname{Aut}(A)}=\abs{\operatorname{Aut}(A^\sigma)}$ for all $\sigma$, which can be true only if $\abs{\operatorname{Aut}(A)}=1$. Indeed, if $\rho\in \operatorname{Aut}(A)\setminus \left\lbrace \mathrm{Id}\right\rbrace $, pick an arrow $\alpha$ of $A$ such that $\rho(\alpha)\neq \alpha$ and choose any $\sigma$ such that $\sigma(\alpha)=1$ while $\sigma(\rho(\alpha))=-1$. Necessarily
$\rho\notin \operatorname{Aut}(A^\sigma)$, so that  $\abs{\operatorname{Aut}(A^\sigma)} < \abs{\operatorname{Aut}(A)}$. 

\end{proof}

\begin{proof}[Proof of Lemma~\ref{crucial2T6T}.]

Figs.\ref{pic:eight6T} and \ref{pic:eight6Tter} show eight $A6T$ relations, where it is assumed that the unseen parts are identical in all 48 diagrams (the dashed arrows are dashed only for the sake of clarity). Up to $\mathrm{P}_2^{(n-2),2}$, the combination $$(1)+(2)+\frac{1}{2}\left[ (3)+(4)-(5)-(6)-(7)-(8)\right]$$ gives the top $A2T$ relation shown on Fig.\ref{pic:Arelations}. To get the other half of $\operatorname{Span}(A2T)$, just reverse the arrows in the previous equation, and change their markings from $x$ to $K-x$.

\end{proof}

\begin{figure}[h!]
\centering 
\psfig{file=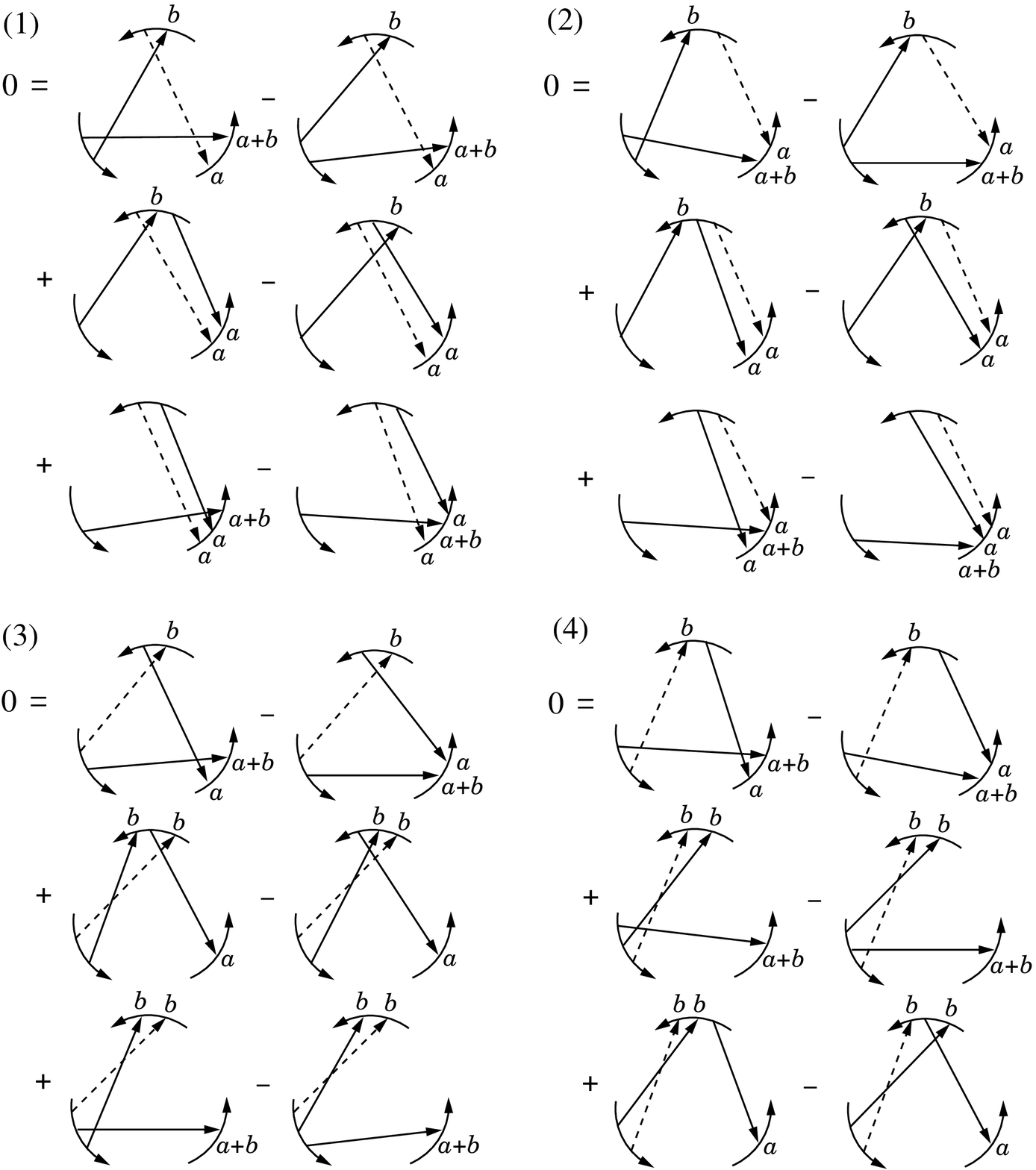,scale=0.60}
\caption{Proof of Lemma~\ref{crucial2T6T} -- part $1$}\label{pic:eight6T}
\end{figure}

\begin{figure}[h!]
\centering 
\psfig{file=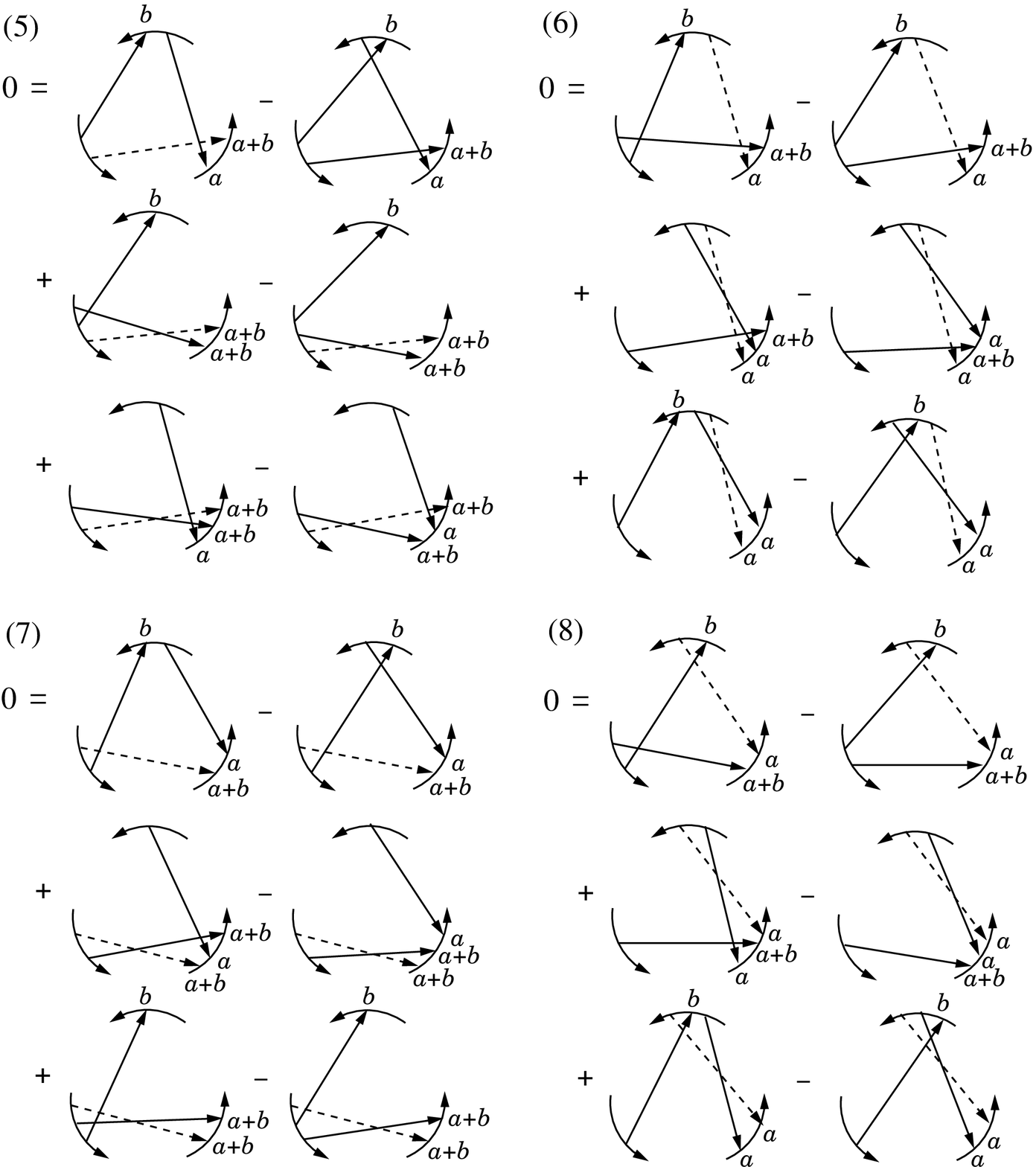,scale=0.60}
\caption{Proof of Lemma~\ref{crucial2T6T} -- part $2$}\label{pic:eight6Tter}
\end{figure}

\begin{proof}[Proof of Lemma~\ref{PrincipalPart}.]
Let $G_d$ be the principal part of $G$. Since $G$ is a Gauss diagram formula it must satisfy the $\mathrm{P}_2$ relations (Theorem~\ref{thm:PolyakAlg}). Since $G$ does not have summands of degree higher than $d$, $G_d$ must satisfy the $P_2^{(n-1),1}$ relations. Lemma~\ref{ImS} concludes the proof.
\end{proof}

\begin{proof}[Proof of Lemma~\ref{ImS}.]

Let $A$ be an arrow diagram, and set $G=S(A)$. By definition of $S$, any couple of Gauss diagrams that differ only by the sign of one arrow happen in $G$ with opposite coefficients. $S$ being linear, this implies:
$$S(\mathcal{A})\subset \bigoplus_{n\geq 1} \operatorname{Span}^\perp(\mathrm{P}_2^{(n-1),1}).$$
On the other hand, let $G$ satisfy the $\mathrm{P}_2^{(n-1),1}$ equations. Define 
$$A=\sum (G,\mathtt{A}^+)\cdot {\mathtt{A}},$$
where the sum runs over all arrow diagrams, and the $+$ operator decorates every arrow with a $+$ sign.
It is easy to check that $G=S(A)$.

\end{proof}

\begin{proof}[Proof of Theorem~\ref{GaussComplete}.]
By Theorem $2.4$ from \citep{MortierGaussDiagrams}, a virtual knot diagram may be recovered from its Gauss diagram. It remains to see the correspondence between classical Reidemeister moves and $\mathrm{R}$-moves.
It relies on the easy fact that an $\mathrm{R}$-move truly corresponds to a Reidemeister move picture if and only if the little loops shown in Fig.\ref{pic:loops} are nullhomologous. 
We use Theorem $1.1$ from \citep{P2} to conclude: the $\mathrm{R}_3$ moves from Fig.\ref{pic:Rmoves} are the only Gauss pictures that can match $\Omega 3a$. Since M. Polyak's proof is local, it works in our framework as well. 

\end{proof}

\begin{figure}[h!]
\centering 
\psfig{file=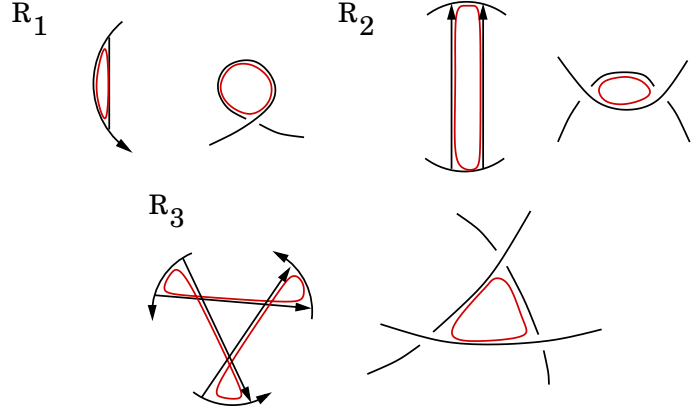,scale=0.70}
\caption{Homological obstruction to $\mathrm{R}$-moves}\label{pic:loops}
\end{figure}

\begin{proof}[Proof of Theorem~\ref{thm:ArrowHomogenous}.]
Let $A\in \mathcal{A}$ be an arrow diagram formula. It suffices to prove that the principal part of $A$, say $A_d$, is an arrow diagram formula. By Theorem~\ref{thm:PolyakAlg}, $\left\langle S(A),\mathrm{P}_i\right\rangle =0$ for $i=1,2,3$. Let us show that the same goes for $A_d$:
\newline

$1.$ $\mathrm{P}_1$ and $S$ are homogeneous, so $\left\langle S(A_d),\mathrm{P}_1\right\rangle =0$.
\newline

$2.$ By Lemma~\ref{ImS}: 
\begin{eqnarray}\label{eq5}
\left\langle S(A_{d-1}),P_2^{(d-2),1}\right\rangle  & = & 0\\\label{eq6}
\left\langle S(A_{d}),P_2^{(d-1),1}\right\rangle  & = & 0.
\end{eqnarray}
The equations $\left\langle S(A),\mathrm{P}_2\right\rangle =0$ together with~\ref{eq5} imply that 
\begin{eqnarray}\label{eq7}
\left\langle S(A_d),\mathrm{P}_2^{(d-2),2}\right\rangle =0.
\end{eqnarray}
Together with~\ref{eq6}, we get:
$\left\langle S(A_d),\mathrm{P}_2\right\rangle =0$.
\newline

$3.$ The last and crucial point:
 $$\begin{array}{cc}
\hspace{-1.5cm}\left\langle S(A),\mathrm{P}_3\right\rangle =0 \,\,\, \Longrightarrow & \left\langle S(A_d),\mathrm{P}_3^{(d-2),2}\right\rangle =0 \\
\hspace{1cm} \stackrel{\ref{eq7}\text{ + Lemma}~\ref{SpanOrth}}{\Longrightarrow} & \left\langle A_d,\mathrm{AP}_3^{(d-2),2}\right\rangle =0  \text{ and } \left\langle A_d,\mathrm{AP}_2^{(d-2),2}\right\rangle =0\\
\hspace{1cm} \stackrel{\text{Lemma}~\ref{crucial2T6T}}{\Longrightarrow} & \left\langle A_d,\mathrm{AP}_3^{(d-2),2}\right\rangle =0  \text{ and }\left\langle A_d,\mathrm{AP}_3^{(d-3),3}\right\rangle =0 \\
\hspace{1cm} \stackrel{\text{Lemma}~\ref{SpanOrth}}{\Longrightarrow} & \left\langle S(A_d),\mathrm{P}_3^{(d-2),2}\right\rangle 0 \text{ and }\left\langle S(A_d),\mathrm{P}_3^{(d-3),3}\right\rangle =0 \\
\hspace{1cm} \Longrightarrow & \left\langle S(A_d),\mathrm{P}_3\right\rangle =0.

\end{array}$$ 
\end{proof}

\begin{proof}[Proof of Theorem~\ref{thm:main}.]
The proof will consist in defining and explaining the following chain of equivalences. 
$$d(A)=0\Leftrightarrow (\bullet(A),A6T_\bullet)=0\Leftrightarrow\,\,  \left\langle A,A6T \right\rangle =0\Leftrightarrow\,\,  \left\langle S(A),I(\mathrm{R}_3) \right\rangle =0$$

Notice that both extremities of this chain are \textit{homogeneous} conditions (for the right one, it follows from the proof of Theorem~\ref{thm:ArrowHomogenous}). So we may assume that $A$ is homogeneous.

$1.$ $d(A)=0\Leftrightarrow (\bullet(A),A6T_\bullet)=0$.\newline
Let us call a degenerate diagram (with one degeneracy) \textit{monotonic} if an arrowhead and an arrowtail meet at the degenerate point. The set of monotonic diagrams forms a basis of $\mathcal{DA}_{/\nabla}$. It is clearly a generating set thanks to the $\nabla$ relations, and it is free because every non monotonic diagram happens in exactly one relation, and every relation contains exactly one of them.

We introduce the orthonormal scalar product $(,)$ with respect to this basis.

Let $D$ be a monotonic degenerate diagram and $B_\bullet$ a based diagram. It is easy to check that the coordinate of $d(B_\bullet)$ along $D$ is given by $$(d(B_\bullet),D)=(B_\bullet,A6T_\bullet(D)),$$
where $A6T_\bullet (D)$ is what we call the \textit{based} $6$-term relation associated with $D$ (see an example on Fig.\ref{pic:based6T}).

\begin{figure}[h!]
\centering 
\psfig{file=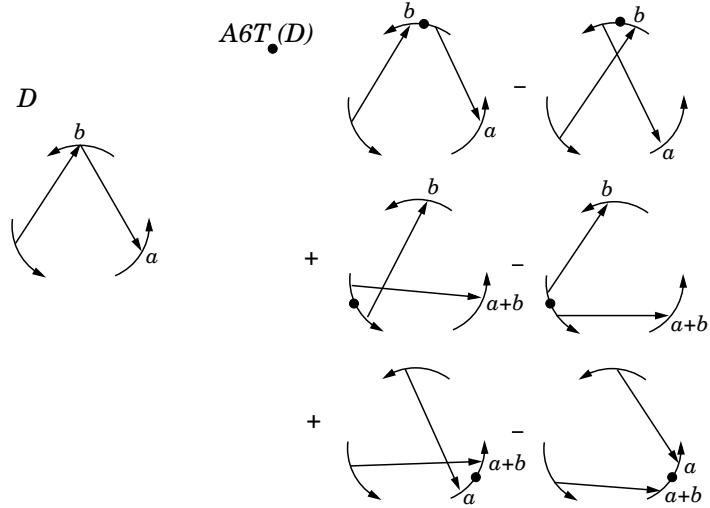,scale=0.60}
\caption{The based 6-term relation associated with a degenerate diagram}\label{pic:based6T}
\end{figure}

$2.$ $(\bullet(A),A6T_\bullet)=0\Leftrightarrow \,\, \left\langle A,A6T \right\rangle =0$. \newline
Let $A$ and $A^\prime$ denote two arrow diagrams. We set:
$$[A,A^\prime]\stackrel{def}{=}(\bullet(A),A^\prime_\bullet)$$
where $A^\prime_\bullet$ is the based diagram obtained from $A^\prime$ by choosing \textit{any} arc as a base arc. We have to show that this is a definition.
If $A\neq A^\prime$, then the right hand side is unambiguously zero.
If $A=A^\prime$, then there are exactly $\abs{\operatorname{Aut}(A^\prime)}$ summands in $\bullet(A)$ that coincide with any fixed choice of base point in $A^\prime$. So the pairing $[,]$ is well-defined, and moreover it coincides with $ \left\langle , \right\rangle $.\newline

$3.$ $ \left\langle A,A6T \right\rangle =0\Leftrightarrow\,\,  \left\langle S(A),I(\mathrm{R}_3) \right\rangle =0$

By Theorem~\ref{thm:PolyakAlg}, $ \left\langle S(A),I(.) \right\rangle $ being invariant under $\mathrm{R}_3$ moves is equivalent to $ \left\langle S(A),\mathrm{P}_3 \right\rangle =0$ for any Polyak's $8T$ relation $\mathrm{P}_3$. Since by hypothesis $A$ is homogeneous (say of degree $n$), this is equivalent to $S(A)$ actually satisfying separately the $\mathrm{P}_3^{(n-3),3}$ and the $\mathrm{P}_3^{(n-2),2}$ relations. Now apply successively Lemmas~\ref{SpanOrth} and~\ref{crucial2T6T} to terminate the proof.

\end{proof}
\begin{proof}[Proof of Theorem~\ref{thm:Grishanov}.]
The fact that no $\gamma_i$ may be trivial gives immediately the condition from \ref{thm:R1et2}.
It is convenient here to check condition $3$ of Theorem~\ref{thm:main}. In any $A6T$ relation, only three diagrams may have pairwise non intersecting arrows, and either all of these have, either no one has. The subsequent reduced relations are shown on Fig.\ref{pic:reduced6T} (the usual relations between the markings of the arrows have a natural equivalent in terms of Grishanov-Vassiliev's region markings). We say that a diagram with its regions marked is \textit{consistent} if its markings satisfy the \textit{chain presentation} rule from Definition~\ref{def:nakedArrow} -- in other words, a diagram is consistent if it appears in $\Phi_\Gamma$. Consider the top relation of Fig.\ref{pic:reduced6T}, which can be written $A_1-A_2-A_3$. We see that:
\begin{enumerate}
\item $A_2$ is consistent if and only if $A_1$ is consistent and $i < j$.
\item $A_3$ is consistent if and only if $A_1$ is consistent and $i > j$.
\end{enumerate}
It follows that $\Phi_\Gamma$ satisfies the $A6T_1$ relations. The proof for $A6T_2$ is similar.
 
\begin{figure}[h!]
\centering 
\psfig{file=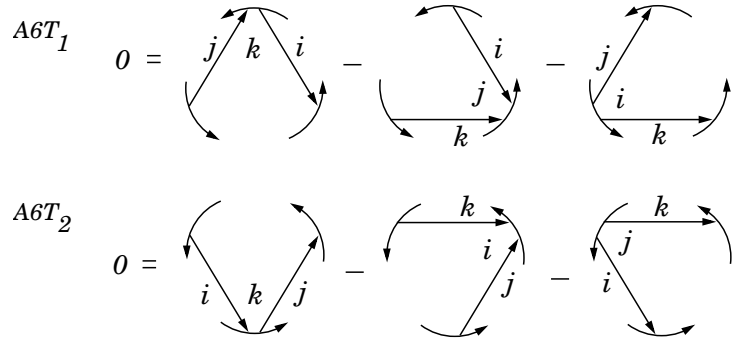,scale=0.60}
\caption{The two kinds of reduced 6-term relations for planar diagrams}\label{pic:reduced6T}
\end{figure}

\end{proof}

\bibliographystyle{plain}
\bibliography{bibli}

\vfill
Institut de Mathematiques de Toulouse

Universite Paul Sabatier et CNRS (UMR 5219)

118, route de Narbonne

31062 Toulouse Cedex 09, France

mortier@math.ups-tlse.fr

\end{document}